\newtheorem{theorem}{Theorem}[section]
\newtheorem{prop}{Proposition}
\newtheorem{con}{Conjecture}
\begin{document}
\title{\bf Subtractive Magic and Antimagic Total Labeling for Basic Families of Graphs}

\author{Inne Singgih}
\affil{University of South Carolina\\{\tt isinggih@email.sc.edu}}

\date{}
\maketitle
\begin{abstract}
\noindent A \textit{subtractive arc-magic labeling} (SAML) of a directed graph $G=(V,A)$ is a bijection $\lambda :V\cup A \to \{1,2,\ldots,|V|+|A|\}$ with the property that for every $xy\in A$ we have $\lambda(xy)+\lambda(y)-\lambda(x)$ equals to an integer constant. If $\lambda(xy)+\lambda(y)-\lambda(x)$ are distinct for every $xy\in A$, then $\lambda$ is a \textit{subtractive arc-antimagic labeling} (SAAL).
A \textit{subtractive vertex-magic labeling} (SVML) of $G$ is such bijection with the property that for every $x\in V$ we have 
$\lambda(x)+\sum_{y\in V, yx \in A} \lambda(yx)-\sum_{y\in V, xy\in A} \lambda(xy)$ equals to an integer constant. If $\lambda(x)+\sum_{y\in V, yx \in A} \lambda(yx)-\sum_{y\in V, xy\in A} \lambda(xy)$ are distinct for every $x\in V$, then $\lambda$ is a \textit{subtractive vertex-antimagic labeling} (SVAL).
In this paper we prove some existence or non-existence of SAML, SVML, SAAL, and SVAL for several basic families of directed graphs, such as paths, cycles, stars, wheels, tadpoles, friendship graphs, and general butterfly graphs. The constructions are given when such labeling(s) exists. 
\end{abstract}

\noindent Keywords:  magic labeling, total labeling, directed graphs

\section{Introduction}

Let $G$ be a simple directed graph with vertex set $V$ and arc set $A$. If $xy \in A$ then $x$ is the \textit{tail} and $y$ is the \textit{head} of the arc $xy$. A \textit{total labeling} of $G$ is a bijection $\lambda :V\cup A \to \{1,2,\ldots,|V|+|A|\}$. If $x,y\in V$ and $xy\in A$, then the \textit{subtractive arc-weight} of $xy$ is defined as $wt^-(xy)=\lambda(xy)+\lambda(y)-\lambda(x)$ and the \textit{subtractive vertex-weight} of $x$ defined as $wt^-(x)=\lambda(x)+\sum_{y\in V, yx \in A} \lambda(yx)-\sum_{y\in V, xy\in A} \lambda(xy)$. A total labeling of $G$ is called SAML (or SVML) if the subtractive arc-weight of every arc (or the subtractive vertex-weight of every vertex) in $G$ are equal to an integer constant, which we call the \textit{magic constant} $\mu$. A total labeling of $G$ is called SAAL (or SVAL) if the subtractive arc-weight of every arc (or the subtractive vertex-weight of every vertex) in $G$ are all distinct. If these distinct subtractive arc-weights (or the subtractive vertex-weights) can form an arithmetic sequence starting at $a$ with difference $d$, then the total labeling is denoted by SA$(a,d)$AL (or SV$(a,d)$AL).

\noindent Barone \cite{barone} defines and proves some basic results on SAML and SVML. 


\begin{prop} 
\cite{barone} If $G$ is a directed cycle, $G$ admits an SAML iff $G$ admits an SVML.
\label{prop: cycle}
\end{prop}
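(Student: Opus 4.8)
The plan is to exploit the cyclic symmetry of $G$ together with a role-swap between vertices and arcs. First I would fix notation by writing the directed cycle as $v_1\to v_2\to\cdots\to v_n\to v_1$ and letting $a_i$ denote the arc $v_iv_{i+1}$, all indices read modulo $n$; note this uses that in a strictly directed cycle every vertex has a unique in-arc and a unique out-arc, so I would state at the outset that "directed cycle" is meant in the consistently-oriented sense. With this notation the SAML condition is that $\lambda(a_i)+\lambda(v_{i+1})-\lambda(v_i)=\mu$ for all $i$, while the SVML condition is that $\lambda(v_i)+\lambda(a_{i-1})-\lambda(a_i)=\mu'$ for all $i$, since $a_{i-1}$ is the in-arc and $a_i$ the out-arc of $v_i$. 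The observation I would record is the formal resemblance of the two systems: $\lambda(a_i)=\mu+\lambda(v_i)-\lambda(v_{i+1})$ versus $\lambda(v_i)=\mu'+\lambda(a_i)-\lambda(a_{i-1})$ — in each case one object's label is a constant plus the backward difference of two consecutive labels of the other object type.

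Next I would turn this resemblance into an explicit map. Given an SAML $\lambda$ with magic constant $\mu$, define $\lambda'$ on the same digraph by $\lambda'(v_i)=\lambda(a_{-i})$ and $\lambda'(a_i)=\lambda(v_{-i})$ (indices mod $n$), which simultaneously reverses the cyclic order and interchanges the roles of vertices and arcs. I would first note that $\lambda'$ is again a total labeling: the vertices now carry exactly the old arc-labels and the arcs carry exactly the old vertex-labels, so $\lambda'$ is still a bijection onto $\{1,\dots,2n\}$. Then I would compute the subtractive vertex-weight of $v_i$ under $\lambda'$, namely $\lambda'(v_i)+\lambda'(a_{i-1})-\lambda'(a_i)=\lambda(a_{-i})+\lambda(v_{-i+1})-\lambda(v_{-i})$, and after the substitution $j=-i$ recognize this as $\lambda(a_j)+\lambda(v_{j+1})-\lambda(v_j)$, the subtractive arc-weight of $a_j$ under $\lambda$, which equals $\mu$ by hypothesis. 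Hence $\lambda'$ is an SVML (indeed with the same constant).

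For the converse I would run the identical construction starting from an SVML $\lambda'$ with constant $\mu'$: set $\lambda(v_i)=\lambda'(a_{-i})$ and $\lambda(a_i)=\lambda'(v_{-i})$, and the analogous one-line substitution shows the subtractive arc-weight of $a_i$ under $\lambda$ equals the subtractive vertex-weight of $v_{-i}$ under $\lambda'$, hence is constantly $\mu'$. In fact $\lambda\mapsto\lambda'$ is an involution, so the two implications are the same computation; combining them gives the stated equivalence.

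I expect the only real content to be spotting the correct correspondence — that one must both reverse the orientation and swap vertices with arcs (neither alone works, as a quick check of weights shows). Once that is in hand, the verification reduces to the reindexing $j=-i$ and a direct comparison of the two defining identities, and the bijectivity check is immediate, so I do not anticipate a genuine obstacle beyond finding this map.
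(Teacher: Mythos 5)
Your argument is correct: the vertex--arc swap combined with the index reversal $i\mapsto -i$ does convert the arc-weight identity $\lambda(a_j)+\lambda(v_{j+1})-\lambda(v_j)=\mu$ into the vertex-weight identity $\lambda'(v_i)+\lambda'(a_{i-1})-\lambda'(a_i)=\mu$, the bijectivity of $\lambda'$ is immediate since vertex labels and arc labels are exchanged wholesale, and the map is an involution so both directions follow from one computation. Note, however, that the paper states this proposition only as a cited result from Barone's thesis and includes no proof of its own, so there is nothing in the text to compare against; your duality construction is a correct, self-contained justification (and your side remark that the orientation reversal is genuinely needed, because the arc-weight is a forward difference of vertex labels while the vertex-weight is a backward difference of arc labels, is accurate).
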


\begin{prop} 
\cite{barone} If $\lambda$ is an SAML on $G=(V,A)$ then $\lambda'=\begin{cases} |V|+|A|+1-\lambda(x) & x\in V \\ |V|+|A|+1-\lambda(xy) & xy\in A \end{cases}$ is also an SAML. 
\label{prop: dualsaml}
\end{prop}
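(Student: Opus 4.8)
The plan is an essentially one-step verification, once the quantity to be computed is set up correctly. Write $N=|V|+|A|$ and take $\lambda'$ as in the statement, i.e. $\lambda'(z)=N+1-\lambda(z)$ for every $z\in V\cup A$. I would first record that $\lambda'$ is again a total labeling of $G$: the map $k\mapsto N+1-k$ is an involution of $\{1,2,\ldots,N\}$, so post-composing the bijection $\lambda$ with it yields another bijection $V\cup A\to\{1,2,\ldots,N\}$.

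The core of the argument is then to express the subtractive arc-weights of $\lambda'$ in terms of those of $\lambda$. Fix an arbitrary arc $xy\in A$. Substituting the definition of $\lambda'$,
\[
\lambda'(xy)+\lambda'(y)-\lambda'(x)=(N+1-\lambda(xy))+(N+1-\lambda(y))-(N+1-\lambda(x)),
\]
and here the three constant terms $N+1$ occur with coefficients $+1,+1,-1$, whose sum is $1$, so exactly one copy survives and the right-hand side collapses to $(N+1)-(\lambda(xy)+\lambda(y)-\lambda(x))=(N+1)-wt^-(xy)$. Since $\lambda$ is an SAML there is a constant $\mu$ with $wt^-(xy)=\mu$ for every arc $xy$, and hence the subtractive arc-weight of every arc under $\lambda'$ equals the single constant $N+1-\mu=|V|+|A|+1-\mu$, independent of the arc. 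This shows $\lambda'$ is an SAML.

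There is no real obstacle here; the one place that needs a moment's care is the bookkeeping of the $N+1$ terms, and the reason the cancellation is so clean is that the signed incidence pattern defining $wt^-$ — coefficient $+1$ on the arc, $+1$ on its head, $-1$ on its tail — has coefficient sum $1$, leaving exactly one copy of $N+1$. (This also signals that an analogous ``dual'' statement for SVML would not be completely automatic, since each vertex weight combines incoming and outgoing arcs with opposite signs, so the number of surviving copies of $N+1$ would depend on $\deg^-(x)-\deg^+(x)$; that would be the point to check carefully if such an SVML analogue were desired.)
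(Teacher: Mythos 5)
Your proof is correct and is exactly the standard duality computation the paper alludes to (the paper itself gives no proof, only citing Barone and remarking that the result ``follows from the duality of magic labeling''): the signed coefficient sum $+1+1-1=1$ leaves one copy of $N+1$, giving the new magic constant $|V|+|A|+1-\mu$. Your closing remark about why the SVML analogue requires the extra hypothesis $\deg^+(x)-\deg^-(x)=a$ for all $x$ also correctly anticipates the condition appearing in Proposition \ref{prop: dualsvml}.
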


\begin{prop} 
\cite{barone} If $\lambda$ is an SVML on $G=(V,A)$ and $deg^+(x)-deg^-(x)=a\, \forall x\in V$, then $\lambda'=\begin{cases} |V|+|A|+1-\lambda(x) & x\in V \\ |V|+|A|+1-\lambda(xy) & xy\in A \end{cases}$ is also an SVML. 
\label{prop: dualsvml}
\end{prop}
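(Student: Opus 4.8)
The plan is to verify directly that the dual labeling $\lambda'$ inherits the magic property, the degree hypothesis being exactly what is needed to control the ``constant'' part of the computation. Write $N=|V|+|A|$, so that $\lambda'(z)=N+1-\lambda(z)$ for every $z\in V\cup A$. First I would note that $\lambda'$ is again a bijection $V\cup A\to\{1,\dots,N\}$, since $t\mapsto N+1-t$ is an involution of $\{1,\dots,N\}$; thus only the subtractive vertex-weights need to be checked.

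Next, fix $x\in V$ and expand $wt^-_{\lambda'}(x)=\lambda'(x)+\sum_{yx\in A}\lambda'(yx)-\sum_{xy\in A}\lambda'(xy)$ using the definition of $\lambda'$. The vertex $x$ is the head of exactly $deg^-(x)$ arcs and the tail of exactly $deg^+(x)$ arcs, so collecting the $N+1$ terms produces the contribution $(N+1)\bigl(1+deg^-(x)-deg^+(x)\bigr)$, while the remaining terms are $-\lambda(x)-\sum_{yx\in A}\lambda(yx)+\sum_{xy\in A}\lambda(xy)=-\,wt^-_{\lambda}(x)$. Hence
\[
wt^-_{\lambda'}(x)=(N+1)\bigl(1+deg^-(x)-deg^+(x)\bigr)-wt^-_{\lambda}(x).
\]

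Finally I would invoke the two hypotheses. Since $deg^+(x)-deg^-(x)=a$ for every $x$, the factor $1+deg^-(x)-deg^+(x)$ equals the vertex-independent constant $1-a$; and since $\lambda$ is an SVML, $wt^-_{\lambda}(x)=\mu$ for every $x$. Therefore $wt^-_{\lambda'}(x)=(N+1)(1-a)-\mu$ is independent of $x$, so $\lambda'$ is an SVML, with magic constant $(|V|+|A|+1)(1-a)-\mu$. The only place any care is needed is the sign bookkeeping when splitting the in- and out-sums; there is no genuine obstacle here, and the role of the degree hypothesis is transparent: it is precisely what makes the $N+1$ term independent of $x$, in contrast with Proposition~\ref{prop: dualsaml}, where every arc contributes exactly one head and one tail and so no such assumption is required.
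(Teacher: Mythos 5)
Your computation is correct: writing $N=|V|+|A|$ and expanding $wt^-_{\lambda'}(x)$ gives $(N+1)\bigl(1+deg^-(x)-deg^+(x)\bigr)-wt^-_{\lambda}(x)=(N+1)(1-a)-\mu$, which is independent of $x$, and you correctly identify the degree hypothesis as the point where this differs from Proposition~\ref{prop: dualsaml}. The paper itself offers no proof (it cites Barone and remarks that the duality is ``commonly known''), and your direct verification is exactly the standard argument that citation is standing in for, so there is nothing to reconcile.
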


\noindent Proposition \ref{prop: dualsaml} and \ref{prop: dualsvml} follows from the duality of magic labeling that are commonly known. Barone stated that duality is preserved in subtractive magic labeling.

\begin{prop} 
\cite{barone} Let $s$ be the length of the longest directed circuit in $G$. If $G$ has an SAML with magic constant $\mu$, then $\dfrac{s+1}{2}\leq \mu \leq \dfrac{2|V|+2|A|-s+1}{2}$.
\label{prop: mubound}
\end{prop}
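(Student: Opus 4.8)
The plan is to exploit the fact that the SAML condition behaves like a potential difference across each arc, so that summing it around any closed directed walk makes all the vertex labels cancel. Concretely, I would let $C\colon x_1\to x_2\to\cdots\to x_s\to x_1$ be a longest directed circuit in $G$ (so it uses $s$ arcs, with subscripts read modulo $s$). For each arc $x_ix_{i+1}$ of $C$, the SAML property gives $\mu=\lambda(x_ix_{i+1})+\lambda(x_{i+1})-\lambda(x_i)$. Adding these $s$ equations, the vertex contributions $\sum_i\lambda(x_{i+1})-\sum_i\lambda(x_i)$ telescope around the cycle and cancel completely, leaving
\[
s\mu \;=\; \sum_{i=1}^{s}\lambda(x_ix_{i+1}).
\]

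The right-hand side is a sum of $s$ \emph{distinct} elements of $\{1,2,\ldots,|V|+|A|\}$, since $\lambda$ is a bijection and a circuit traverses no arc twice. Hence $s\mu$ is at least $1+2+\cdots+s=\frac{s(s+1)}{2}$ and at most $(|V|+|A|)+(|V|+|A|-1)+\cdots+(|V|+|A|-s+1)=s(|V|+|A|)-\frac{s(s-1)}{2}$. Dividing by $s$ then yields $\frac{s+1}{2}\le\mu\le|V|+|A|-\frac{s-1}{2}=\frac{2|V|+2|A|-s+1}{2}$, which is exactly the asserted bound. Alternatively, once the lower bound is in hand, the upper bound follows immediately from the duality of Proposition \ref{prop: dualsaml}: the dual SAML $\lambda'$ has magic constant $|V|+|A|+1-\mu$, and applying the lower bound $\frac{s+1}{2}$ to $\lambda'$ rearranges to the claimed upper bound on $\mu$.

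I do not expect a genuine obstacle here; the only point requiring care is the bookkeeping behind the telescoping, namely that in a closed directed walk each vertex appears as a head exactly as often as it appears as a tail (so the vertex labels cancel with multiplicity), together with the observation that the $s$ arc labels summed are pairwise distinct because in a simple digraph a circuit repeats no arc. Everything else is an elementary estimate of the smallest and largest possible sum of $s$ distinct labels.
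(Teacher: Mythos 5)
Your argument is correct: summing the SAML condition around a longest directed circuit telescopes the vertex labels away, leaving $s\mu$ equal to a sum of $s$ distinct arc labels from $\{1,2,\ldots,|V|+|A|\}$, and the extremal estimates give exactly the stated bounds (the dual of Proposition \ref{prop: dualsaml} indeed gives the upper bound from the lower one as well). Note that the paper itself states this proposition as a cited result from \cite{barone} without reproducing a proof, so there is nothing in the text to compare against; your telescoping argument is the standard one and is surely the intended proof.
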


\noindent Given a total labeling $\lambda$ of $G$. $\lambda$ is a \textit{strong} total labeling if $\lambda (V)=\{1,2,\ldots,|V|\}$ and is a \textit{strong$^*$} total labeling if $\lambda (A)=\{1,2,\ldots,|A|\}$. A graceful labeling of an undirected graph $G=(V,E)$ is an injective map $\phi:V\to \{1,2,\ldots,m+1\}$ such that $\{|\phi(x)-\phi(y)|:xy\in E\}=\{1,2,\ldots,m\}$. The summary of results for graceful labeling of undirected trees can be found in \cite{gallian}. 

\begin{theorem}
\cite{barone} If $T$ is a tree then $T$ has an orientation that admits a strong SAML iff $T$ has a graceful labeling.
\label{th: graceful}
\end{theorem}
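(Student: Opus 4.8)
The plan is to set up an explicit correspondence, at the level of the underlying bijections, between graceful labelings of the undirected tree underlying $T$ and strong SAMLs of orientations of $T$. The starting observation is a count: for a tree on $n$ vertices we have $|V|=n$ and $|A|=n-1$, so any total labeling uses the label set $\{1,\dots,2n-1\}$, and being \emph{strong} forces the vertices to receive exactly $\{1,\dots,n\}$ and hence the arcs to receive exactly $\{n+1,\dots,2n-1\}$. For the ``if'' direction I would take a graceful labeling $\phi:V\to\{1,\dots,n\}$ of the underlying tree (here $m+1=n$), orient every edge of $T$ from the endpoint with the larger $\phi$-value to the endpoint with the smaller one, keep $\lambda=\phi$ on $V$, and put $\lambda(xy)=n+\phi(x)-\phi(y)$ on each arc $xy$. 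Because the graceful condition says $\{\,|\phi(x)-\phi(y)|:xy\in E\,\}=\{1,\dots,n-1\}$ and each arc runs from a larger to a smaller label, the numbers $\phi(x)-\phi(y)$ run bijectively over $\{1,\dots,n-1\}$; hence the arc labels run bijectively over $\{n+1,\dots,2n-1\}$ and $\lambda$ is a strong total labeling. The magic property is then immediate: $wt^-(xy)=\lambda(xy)+\lambda(y)-\lambda(x)=\bigl(n+\phi(x)-\phi(y)\bigr)+\phi(y)-\phi(x)=n$ for every arc, so $\lambda$ is a strong SAML with magic constant $\mu=n$.

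For the ``only if'' direction, suppose some orientation $\vec T$ of $T$ admits a strong SAML $\lambda$ with magic constant $\mu$. Set $\phi:=\lambda|_V$, a bijection $V\to\{1,\dots,n\}$; since gracefulness only involves absolute edge differences, it is enough to show $\{\,|\phi(x)-\phi(y)|:xy\in E(T)\,\}=\{1,\dots,n-1\}$. The magic equation rearranges to $\phi(x)-\phi(y)=\lambda(xy)-\mu$ for every arc $xy$, and because $\lambda$ is strong the values $\lambda(xy)$ are precisely the $n-1$ consecutive integers $n+1,\dots,2n-1$; therefore the signed differences $\phi(x)-\phi(y)$ over the $n-1$ arcs form a set of $n-1$ consecutive integers. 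None of them equals $0$ (injectivity of $\phi$), so this set of consecutive integers lies entirely among the positive integers or entirely among the negative ones. In the positive case it is $\{k,k+1,\dots,k+n-2\}$ for some $k\ge 1$; since the difference of two elements of $\{1,\dots,n\}$ never exceeds $n-1$, we get $k+n-2\le n-1$, forcing $k=1$. The negative case is symmetric (the smallest difference is at least $-(n-1)$), giving the set $\{-(n-1),\dots,-1\}$. Either way the absolute differences are exactly $\{1,\dots,n-1\}$, so $\phi$ is a graceful labeling of the underlying tree.

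I do not expect a genuine obstacle; the whole argument is essentially a change of variables once one notices $\lambda(xy)-\mu=\lambda(x)-\lambda(y)$. The one step that wants a little care is the converse, where the content is that ``consecutive, nonzero, and bounded in absolute value by $n-1$'' pins the set of signed edge differences down to $\{1,\dots,n-1\}$ or $\{-(n-1),\dots,-1\}$ (equivalently $\mu=n$ or $\mu=2n$), together with the remark that the particular orientation one is handed is irrelevant to whether $\phi$ witnesses gracefulness. It is worth recording that the construction in the ``if'' direction always realizes $\mu=n$, so in fact one obtains a strong SAML with a uniform, prescribed magic constant whenever the underlying tree is graceful.
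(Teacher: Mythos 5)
Your proof is correct and matches the approach the paper relies on: the paper cites this result from Barone without reproving it, but the orientation rule it quotes (head gets the smaller vertex label) and the arc-label formula it uses for paths ($\lambda(a_i)=n+(\phi(\text{tail})-\phi(\text{head}))$, giving $\mu=n$) are exactly your ``if'' direction. Your converse, pinning the set of signed differences to $\{1,\dots,n-1\}$ or its negative via the consecutivity and boundedness argument, is sound and completes the equivalence as stated.
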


\noindent In the following sections we gives some results and constructions of some subtractive magic and subtractive anti-magic labelings for paths, cycles, stars, wheels, tadpoles, friendship graphs, and butterfly graphs. For convenience, in this paper we use the notation $\{x_i\}_{i=1}^m$ to denotes the set $\{x_1,x_2,\ldots,x_m\}$. When labeling exists, only one construction is given and the duals are left for the reader to explore.

\section{Paths}
In this paper the notation $P_n$ is used for a directed path (dipath) with $n$ vertices.

\begin{theorem}
Dipath $P_n$ has a strong SAML for any $n$ with magic constant $\mu=n$.
\end{theorem}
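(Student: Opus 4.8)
The plan is to reduce the problem to finding a graceful-type vertex labeling of the underlying path, exactly in the spirit of Theorem~\ref{th: graceful}, and then to write one down explicitly. First I would record the parameters: the path on $n$ vertices has $n$ vertices and $n-1$ arcs, so a strong total labeling must send $V$ bijectively onto $\{1,\dots,n\}$ and $A$ bijectively onto $\{n+1,\dots,2n-1\}$. Rewriting the magic equation $\lambda(xy)+\lambda(y)-\lambda(x)=\mu$ as $\lambda(xy)=\mu+\lambda(x)-\lambda(y)$ shows that, once the vertex labels and an orientation of each edge are chosen, the arc labels are completely determined; these fall bijectively onto $\{n+1,\dots,2n-1\}$ with $\mu=n$ precisely when the ``oriented differences'' $\lambda(\text{tail})-\lambda(\text{head})$ run over $\{1,\dots,n-1\}$. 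So the whole task is to choose vertex labels in $\{1,\dots,n\}$ and orient every edge from its larger-labeled end to its smaller-labeled end so that the resulting differences are a permutation of $\{1,\dots,n-1\}$ --- i.e.\ to produce a graceful labeling of the path.

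Next I would exhibit the labeling explicitly. Writing the path as $u_1u_2\cdots u_n$, put $\lambda(u_{2k-1})=k$ and $\lambda(u_{2k})=n+1-k$, the standard ``zig-zag''. One checks that this is a bijection onto $\{1,\dots,n\}$ and that $|\lambda(u_i)-\lambda(u_{i+1})|=n-i$ for every $i$, so the consecutive differences are exactly $n-1,n-2,\dots,1$. Moreover each even-indexed vertex then carries a strictly larger label than both of its neighbors, so orienting every edge from larger label to smaller label is consistent and yields the alternating orientation with sources $u_2,u_4,\dots$ and sinks $u_1,u_3,\dots$. Finally, for each edge let $x$ be its larger-labeled end and $y$ its smaller-labeled end, orient it as $xy$, and set $\lambda(xy)=n+\lambda(x)-\lambda(y)$; by the reduction in the previous paragraph these arc labels form a permutation of $\{n+1,\dots,2n-1\}$, $\lambda$ is a strong bijection onto $\{1,\dots,2n-1\}$, and every subtractive arc-weight equals $n$.

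The remaining work is bookkeeping: verify $|\lambda(u_i)-\lambda(u_{i+1})|=n-i$ by splitting into the two residues of $i$ modulo $2$, and dispose of the small cases $n=1$ (a single vertex labeled $1$, vacuously an SAML with $\mu=1$) and $n=2$ (label $u_1,u_2$ by $2,1$ and the arc by $3$). I do not expect a genuine obstacle here; the only mildly delicate point is confirming that the zig-zag is indeed graceful and that the induced high-to-low orientation is well defined, both of which are short parity checks. As an alternative one could bypass the explicit construction by invoking Theorem~\ref{th: graceful} together with the classical fact that paths are graceful, noting that in that correspondence the magic constant always comes out as $|V|=n$; but since the paper favors explicit labelings I would keep the construction above.
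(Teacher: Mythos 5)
Your proposal is correct and matches the paper's proof essentially verbatim: the same reduction via graceful labelings (Theorem~\ref{th: graceful}), the same zig-zag vertex labeling ($\lambda(u_{2k-1})=k$, $\lambda(u_{2k})=n+1-k$, which is the paper's $\lambda(v_i)=\frac{i+1}{2}$ / $n+1-\frac{i}{2}$), the same larger-to-smaller orientation, and the same arc labels $2n-i$.
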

\begin{proof}
In \cite{4end} Rosa et al. proved that undirected trees that has at most 4 end-vertices has a graceful labeling. By Theorem \ref{th: graceful}, dipath $P_n$ admits a strong SAML. The construction of graceful labeling for undirected paths is given in multiple results, which summary can be found at \cite{gallian}. The conversion from graceful labeling to strong SAML is given in the proof of Theorem \ref{th: graceful}. The construction of the strong SAML can be written directly as follows:\\
Let $V=\{v_i\}_{i=1}^n$ and $A=\{a_i\}_{i=1}^{n-1}$ where $a_i=v_{i+1}v_i$ if $i$ is odd and $a_i=v_iv_{i+1}$ if $i$ is even.\\
Let $\lambda(v_i)=\frac{i+1}{2}$ when $i$ is odd, $\lambda(v_i)=n+1-\frac{i}{2}$ when $i$ is even, and $\lambda(a_i)=2n-i$. Then
$$wt^-(a_i)=\begin{cases} 
\lambda(a_i)+\lambda(v_i)-\lambda(v_{i+1})=(2n-i)+\frac{i+1}{2}-\left(n+1-\frac{i+1}{2}\right)=n & \text{if }i\text{ is odd}\\
\lambda(a_i)+\lambda(v_{i+1})-\lambda(v_i)=(2n-i)+\frac{(i+1)+1}{2}-\left(n+1-\frac{i}{2}\right)=n & \text{if }i\text{ is even}
\end{cases} $$
Barone in \cite{barone} describes the orientation as: for any arc, the vertex with smaller label is the head and the one with larger label is the tail.
\end{proof}

\vspace{2mm}
\begin{theorem}
Dipath $P_n$ does not have SVML for any $n$.
\end{theorem}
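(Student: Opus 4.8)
The plan is a proof by contradiction: assume $P_n$, with some orientation of its arcs, carries an SVML $\lambda$ with magic constant $\mu$, using the labels $1,\dots,2n-1$ on its $n$ vertices and $n-1$ arcs.

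First I would extract a global identity. Summing the subtractive vertex-weights over all vertices, every arc label is counted once positively (at its head) and once negatively (at its tail), so these terms cancel and $\sum_{x\in V}wt^-(x)=\sum_{x\in V}\lambda(x)$. Hence $n\mu=\sum_{x\in V}\lambda(x)$; as the right side is a sum of $n$ of the integers $1,\dots,2n-1$ it lies between $1+\cdots+n$ and $n+\cdots+(2n-1)$, which gives $\tfrac{n+1}{2}\le\mu\le\tfrac{3n-1}{2}$, and in particular $1\le\mu\le 2n-1$.

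Next comes the telescoping step. Write the path as $v_1,\dots,v_n$, let $a_k$ be the arc joining $v_k$ and $v_{k+1}$, and set $\sigma_k=\varepsilon_k\lambda(a_k)$ with $\varepsilon_k=+1$ if $a_k$ points toward $v_{k+1}$ and $\varepsilon_k=-1$ otherwise. Then $wt^-(v_1)=\lambda(v_1)-\sigma_1$, $wt^-(v_k)=\lambda(v_k)+\sigma_{k-1}-\sigma_k$ for $2\le k\le n-1$, and $wt^-(v_n)=\lambda(v_n)+\sigma_{n-1}$; adding the first $k$ of the equations $wt^-(v_i)=\mu$ telescopes to $\sigma_k=S_k-k\mu$ with $S_k=\lambda(v_1)+\cdots+\lambda(v_k)$, while the equation at $v_n$ forces $S_n=n\mu$. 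Thus
\[
\lambda(a_k)=|S_k-k\mu|\qquad (1\le k\le n-1),
\]
and I would record three consequences: (i) the numbers $|S_k-k\mu|$ are pairwise distinct and together equal $\{1,\dots,2n-1\}\setminus\{\lambda(v_1),\dots,\lambda(v_n)\}$; (ii) $\mu$ is not a vertex label, for if $\lambda(v_k)=\mu$ then $\sigma_k=\sigma_{k-1}$, which gives $\lambda(a_{k-1})=\lambda(a_k)$ for interior $k$ and $\sigma_1=0$ or $\sigma_{n-1}=0$ for $k=1$ or $k=n$, all impossible — so $\mu$ is one of the arc labels; (iii) writing $d_j=\lambda(v_j)-\mu$ (all nonzero by (ii)), the sequence $0=\sigma_0,\sigma_1,\dots,\sigma_{n-1},\sigma_n=0$ has steps $d_j$, never vanishes strictly in between, and $\sum_j d_j=0$, so $|\sigma_k|\le\min\bigl(\sum_{j\le k}|d_j|,\sum_{j>k}|d_j|\bigr)\le\tfrac12\sum_j|d_j|=\sum_{\lambda(v_j)>\mu}(\lambda(v_j)-\mu)$; in particular $|\sigma_1|=|\lambda(v_1)-\mu|$ and $|\sigma_{n-1}|=|\lambda(v_n)-\mu|$ are both strictly less than $2n-1$.

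The contradiction is then forced from (i)--(iii): the set $\{|S_k-k\mu|\}$ of forced arc labels must be distinct, contain $\mu$, fill the complement of the vertex labels, and sum to $n(2n-1-\mu)$, while each entry is bounded as in (iii). I would finish by splitting on the parity of $\mu$ and on whether $1$ and $2n-1$ are vertex labels or arc labels; in every branch the values $|S_k-k\mu|$ become pinned down tightly enough (using the bound in (iii), the distinctness in (i), and the endpoint identities) that one meets one of three obstructions: two forced values coincide, their parities clash with those of the values still available for arcs, or one of them falls outside $\{1,\dots,2n-1\}$. The identity $n\mu=\sum\lambda(v)$, the telescoping formula $\lambda(a_k)=|S_k-k\mu|$, and the remark that $\mu$ must be an arc label are routine; the genuine difficulty is this last step, since the obstruction that fires is not uniform across cases, so organizing the branches into a single argument valid for every $n$ and every orientation is where the real care lies.
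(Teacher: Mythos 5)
Your setup is sound as far as it goes: the identity $n\mu=\sum_{x\in V}\lambda(x)$, the telescoping formula $\lambda(a_k)=|S_k-k\mu|$, and the observation that $\mu$ cannot be a vertex label (hence must be an arc label) are all correctly derived and verifiable. But the proof stops exactly where it would have to start. The final paragraph asserts that "in every branch" of a case split on the parity of $\mu$ and on the placement of $1$ and $2n-1$ "one meets one of three obstructions," yet not a single branch is actually carried out, no reason is given why the parity of $\mu$ is relevant (nothing in your setup produces a parity constraint), and no instance of any of the three obstructions is exhibited. You acknowledge this yourself: "the genuine difficulty is this last step." That difficulty is the entire content of the theorem --- everything established in (i)--(iii) is consistent with the existence of many total labelings, and nothing you have written rules out that for some $n$ and some orientation all the constraints can be met simultaneously. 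As it stands this is a proof strategy with the contradiction postulated rather than derived, so it is not a proof.

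For contrast, the paper dispatches the theorem in a few lines by a completely different device: every orientation of $P_n$ has a vertex that is an exclusive tail (a source), and evaluating $wt^-$ at the highest-labeled such vertex bounds $\mu$ above by $2n-2$ (or $2n-4$ if that vertex is internal), which is then played off against a prior bound excluding $\mu$ from $\{1,\ldots,2n-1\}$. If you want to salvage your telescoping framework, the most promising move is not a parity case split but to ask where the label $2n-1$ can sit: your bound $|\sigma_k|\le\sum_{\lambda(v_j)>\mu}(\lambda(v_j)-\mu)$ together with $\tfrac{n+1}{2}\le\mu\le\tfrac{3n-1}{2}$ is the kind of quantitative input that could exclude it from being an arc label, and one would then need a separate argument when it is a vertex label. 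Until some such argument is written down in full, the theorem remains unproved by your route.
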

\begin{proof}
Suppose there is an SVML $\lambda$ for $P_n$ with magic constant $\mu$. It is easy to see that $\mu \not\in \{1,2,\ldots,2n-1\}$. In any orientation of $P_n$ there must be a vertex that only serves as a tail for all arc(s) incident to it. Let $x$ be such vertex with largest label, then must have $\lambda(x)=2n-1$. Comparing $\mu$ with $\lambda(x)$ we have that $\mu\leq 2n-2$ if $x$ is an endpoint of $P_n$, and $\mu\leq 2n-4$ if $x$ is not an endpoint of $P_n$. Contradiction. 
\end{proof}

\vspace{2mm}
\begin{theorem}
Dipath $P_n$ has a strong SA$(n+3,1)$AL for all $n>1$.
\end{theorem}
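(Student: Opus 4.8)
The plan is to write down an explicit strong total labeling. As in the proof of Theorem~\ref{th: graceful}, for any orientation of the path the arc--weight of $a_i$ has the form $wt^-(a_i)=\lambda(a_i)+\lambda(\mathrm{head}(a_i))-\lambda(\mathrm{tail}(a_i))$, so once we fix a bijection $\phi=\lambda|_V:V\to\{1,\dots,n\}$, an orientation of each edge, and the assignment of the arc labels $\{n+1,\dots,2n-1\}$, all $n-1$ arc--weights are determined, and the task reduces to making them be exactly $n+3,n+4,\dots,2n+1$. The first step is a counting check: summing the target weights gives $\sum_{k=0}^{n-2}(n+3+k)=\tfrac{(n-1)(3n+4)}{2}$, and since $\sum_i\lambda(a_i)=\tfrac{3n(n-1)}{2}$, any valid construction must satisfy $\sum_{x\in V}\phi(x)\bigl(\deg^-(x)-\deg^+(x)\bigr)=2(n-1)$. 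This already rules out the naive choice (all arcs oriented the same way with labels in their natural order, which produces the block $\{n+2,\dots,2n\}$), so a genuinely non-uniform orientation is forced.

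Concretely I would base the construction on the standard zig-zag graceful labeling of the path, $\phi(v_{2k-1})=k$ and $\phi(v_{2k})=n+1-k$, which realises the edge--differences $\{1,\dots,n-1\}$ along consecutive edges as $\delta_i:=|\phi(v_i)-\phi(v_{i+1})|=n-i$; keeping this $\phi$ as the vertex part makes the labeling automatically strong. Orienting edge $a_i$ makes its weight contribution $+\delta_i$ or $-\delta_i$, a free binary choice, so $wt^-(a_i)=\lambda(a_i)+\varepsilon_i(n-i)$ with $\varepsilon_i\in\{\pm1\}$. I would then choose the set $R=\{i:\varepsilon_i=+1\}$ so that the sum condition above holds --- equivalently $\sum_{i\in R}(n-i)=\tfrac{(n-1)(n+4)}{4}$ --- and finally let $\lambda$ on the arcs be the (essentially forced) bijection onto $\{n+1,\dots,2n-1\}$ that slides the resulting multiset $\{\lambda(a_i)+\varepsilon_i(n-i)\}$ onto the consecutive block $\{n+3,\dots,2n+1\}$. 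With $R$ and $\lambda$ written out explicitly, verification is a one-line substitution: list the $n-1$ values $wt^-(a_i)$ and check they exhaust $n+3,\dots,2n+1$.

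The step I expect to be the real obstacle is making the two discrete choices compatible. The quantity $\tfrac{(n-1)(n+4)}{4}$ is an integer only when $n\equiv 0,1\pmod 4$, so for $n\equiv 2,3\pmod 4$ the ``keep the graceful labeling, flip a subset of edges'' recipe cannot work verbatim and one must either swap two of the vertex labels or start from a slightly different graceful labeling; I anticipate the proof therefore splits into residue classes of $n$ modulo $4$ (or at least into two parity cases), each with its own closed-form formula for $\lambda$. In addition, the smallest cases deserve a direct look: for very small $n$ the largest attainable value of $\sum_{x}\phi(x)\bigl(\deg^-(x)-\deg^+(x)\bigr)$ is below $2(n-1)$, so those cases need a separate (ad hoc) argument, and the general construction above should be stated for $n$ past that threshold.
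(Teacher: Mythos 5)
Your proposal does not reach a proof: the construction is never actually produced. The set $R$, the arc bijection, and the small-$n$ cases are all left open, and the central step --- passing from the sum condition $\sum_{x}\phi(x)\bigl(\deg^-(x)-\deg^+(x)\bigr)=2(n-1)$ to an actual assignment of arc labels realizing the block $\{n+3,\dots,2n+1\}$ --- is asserted, not carried out. The sum condition is only necessary: ``sliding the multiset onto the consecutive block'' is a bipartite matching problem (you need a bijection $\sigma$ of the arcs onto $\{n+1,\dots,2n-1\}$ with $\{\sigma(a_i)+\varepsilon_i(n-i)\}=\{n+3,\dots,2n+1\}$), and its solvability does not follow from the totals agreeing. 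Moreover, as you yourself note, the subset-sum target $\tfrac{(n-1)(n+4)}{4}$ is an integer only for $n\equiv 0,1\pmod 4$, so the recipe cannot even start for half of all $n$, and those cases are deferred to an unspecified modification. As written, the argument establishes nothing beyond the necessary condition.

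That said, your counting check is correct and is worth more than you give it credit for: it shows the statement itself is in trouble. For $n=2$ the single arc weight is $3\pm 1\in\{2,4\}$, never $5=n+3$; for $n=3$ the maximum of $\sum_i|\phi(v_{i+1})-\phi(v_i)|$ over permutations of $\{1,2,3\}$ is $3<4=2(n-1)$; so no strong SA$(n+3,1)$AL exists for $n=2,3$. The paper's own proof uses exactly the ``naive choice'' you rule out ($\lambda(v_i)=i$, all arcs oriented forward) and only appears to reach first weight $n+3$ because it sets $\lambda(a_i)=2n+1-i$, so that $\lambda(a_1)=2n>2n-1=|V|+|A|$ while the label $n+1$ is never used --- that is, it is not a total labeling. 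Repairing it to $\lambda(a_i)=2n-i$ yields arc labels $\{n+1,\dots,2n-1\}$ and weights $\{n+2,\dots,2n\}$, a strong SA$(n+2,1)$AL, exactly as your necessary condition predicts. The productive conclusion is that the theorem should be restated with $n+2$ in place of $n+3$, after which the paper's corrected one-line construction proves it; pursuing $n+3$ via orientation flips and residue-class case analysis is solving the wrong problem.
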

\begin{proof}
Let $V=\{v_i\}_{i=1}^n$ and $A=\{a_i\}_{i=1}^{n-1}$ where $a_i=v_iv_{i+1}$ for $1\leq i\leq n-1$.\\
Let $\lambda(v_i)=i$ for $1\leq i \leq n$ and $\lambda(a_i)=2n+1-i$ for $1\leq i \leq n-1$, then:
$$wt^-(a_i)=\lambda(a_i)+\lambda(v_{i+1})-\lambda(v_i)=2n+2-i \quad \text{for } 1\leq i \leq n-1 $$
Hence $wt^-(a_i)\in \{n+3, n+4, \ldots, 2n+1\}$ for $1\leq i \leq n-1$.
\end{proof}

\vspace{2mm}
\begin{theorem}
Dipath $P_n$ has a strong$^*$ SV$(n,1)$AL for all $n>1$.
\end{theorem}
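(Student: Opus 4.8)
The plan is to fix the linear orientation $v_1\to v_2\to\cdots\to v_n$ (so $a_i=v_iv_{i+1}$ for $1\le i\le n-1$) and write down an explicit strong$^*$ labeling rather than appeal to any general machinery. First I would assign the arcs greedily, $\lambda(a_i)=i$ for $1\le i\le n-1$; this uses exactly $\{1,\dots,|A|\}$ on the arcs, which is what strong$^*$ demands, and leaves $\{n,n+1,\dots,2n-1\}$ for the vertices. Note that with $n$ vertices and target difference $d=1$ and start $a=n$, the vertex-weights must be precisely the set $\{n,n+1,\dots,2n-1\}$, so there is no room to maneuver on the target side.

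The key observation is that, for this arc assignment, the arc-contribution to each subtractive vertex-weight telescopes. For an internal vertex $v_i$ with $2\le i\le n-1$, the incoming arc is $a_{i-1}$ and the outgoing arc is $a_i$, contributing $\lambda(a_{i-1})-\lambda(a_i)=(i-1)-i=-1$; for $v_1$ the only incident arc is the outgoing $a_1$, contributing $-1$; and for $v_n$ the only incident arc is the incoming $a_{n-1}$, contributing $n-1$. Hence $wt^-(v_i)=\lambda(v_i)-1$ for $1\le i\le n-1$ and $wt^-(v_n)=\lambda(v_n)+(n-1)$.

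Given this, I would put the smallest available vertex label on the endpoint that carries the large arc-contribution: set $\lambda(v_n)=n$, so $wt^-(v_n)=2n-1$; and distribute the remaining labels over $v_1,\dots,v_{n-1}$ by $\lambda(v_i)=n+i$, giving $wt^-(v_i)=n+i-1$, which runs through $n,n+1,\dots,2n-2$. Collecting the weights yields exactly $\{n,n+1,\dots,2n-1\}$, an arithmetic progression with first term $n$ and common difference $1$, which is the claim. As a sanity check (and an explanation of why the assignment is essentially forced), summing all vertex-weights makes every arc label cancel in pairs, so $\sum_x wt^-(x)=\sum_x\lambda(x)=\sum_{k=n}^{2n-1}k$, matching the sum of the target weight set.

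There is no real obstacle here — the argument is a single explicit construction — so the only point deserving care is the choice of which vertex gets the small label $n$: the endpoint $v_n$ has arc-contribution $n-1$, the largest of all, so it must receive the smallest vertex label for every weight to fall in the window $[n,2n-1]$; every other vertex has arc-contribution $-1$, which makes the rest of the assignment essentially free (and the displayed identities above are then a routine substitution, which I would write out in full in the proof).
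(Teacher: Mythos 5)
Your construction is correct and follows essentially the same route as the paper: the same orientation, the same arc labeling $\lambda(a_i)=i$, and a vertex labeling that differs from the paper's $\lambda(v_i)=2n-i$ only by a permutation of the labels $\{n+1,\dots,2n-1\}$ among $v_1,\dots,v_{n-1}$ (both put $n$ on $v_n$), yielding the same weight set $\{n,\dots,2n-1\}$. The telescoping observation and the forced choice of $\lambda(v_n)=n$ are nice motivation but do not change the substance of the argument.
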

\begin{proof}
Let $V=\{v_i\}_{i=1}^n$ and $A=\{a_i\}_{i=1}^{n-1}$ where $a_i=v_iv_{i+1}$ for $1\leq i\leq n-1$.\\
Let $\lambda(v_i)=2n-i$ for $1\leq i \leq n$ and $\lambda(a_i)=i$ for $1\leq i \leq n-1$, then:
\begin{align*}
wt^-(v_1) &= \lambda(v_1)-\lambda(a_1)=2n-2\\
wt^-(v_n) &= \lambda(v_n)+\lambda(a_{n-1})=2n-1\\
wt^-(v_i) &= \lambda(v_i)+\lambda(a_{i-1})-\lambda(a_i)=2n-1-i \quad \text{for } 2\leq i \leq n-1
\end{align*} 
Hence $wt^-(v_i)\in \{n, n+1, \ldots, 2n-1\}$ for $1\leq i \leq n$.
\end{proof}

\section{Cycles}

\begin{theorem}
Directed cycle (dicycle) $C_n$ have neither SAML nor SVML for any $n$.
\end{theorem}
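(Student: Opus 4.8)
The plan is to reduce to SAML via Proposition~\ref{prop: cycle} and then obtain a contradiction from the single observation that the magic constant $\mu$ is itself a value in the label set $\{1,\dots,2n\}$.

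First I would assume, for contradiction, that the dicycle $C_n$ admits an SAML $\lambda$ with magic constant $\mu$ (here $n\ge 2$, since a simple digraph has no loop). Write $V=\{v_i\}_{i=1}^n$ and $A=\{a_i\}_{i=1}^n$ with $a_i=v_iv_{i+1}$, indices taken modulo $n$. Summing $wt^-(a_i)=\lambda(a_i)+\lambda(v_{i+1})-\lambda(v_i)=\mu$ over all $i$, the vertex contributions telescope, so $\sum_{i=1}^n\lambda(a_i)=n\mu$; since the $\lambda(a_i)$ are $n$ distinct elements of $\{1,\dots,2n\}$, this forces $(n+1)/2\le\mu\le(3n+1)/2$, and as $\mu$ is an integer we get $\mu\in\{1,\dots,2n\}$. (Alternatively one may simply invoke Proposition~\ref{prop: mubound} with $s=n$.) Hence $\mu=\lambda(z)$ for a unique $z\in V\cup A$.

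Next I would split on the type of $z$. If $z$ is an arc, say $z=a_s=v_sv_{s+1}$, then $\mu=wt^-(a_s)=\lambda(a_s)+\lambda(v_{s+1})-\lambda(v_s)=\mu+\lambda(v_{s+1})-\lambda(v_s)$, which forces $\lambda(v_s)=\lambda(v_{s+1})$; this is impossible since $\lambda$ is a bijection and $v_s\ne v_{s+1}$. If instead $z$ is a vertex, say $z=v_r$, I would consider the unique arc $a_{r-1}=v_{r-1}v_r$ entering $v_r$: then $\mu=wt^-(a_{r-1})=\lambda(a_{r-1})+\lambda(v_r)-\lambda(v_{r-1})=\lambda(a_{r-1})+\mu-\lambda(v_{r-1})$, which forces $\lambda(a_{r-1})=\lambda(v_{r-1})$, again impossible. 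Either way we reach a contradiction, so $C_n$ has no SAML, and Proposition~\ref{prop: cycle} then shows it has no SVML either.

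I do not expect a genuine obstacle here: the only slightly delicate step is the first one, pinning down that $\mu$ must literally equal one of the labels $1,\dots,2n$, since it is precisely this that forces two distinct elements to receive the same label in each of the two cases. The remaining work is just direct substitution into the defining weight equations.
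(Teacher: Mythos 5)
Your proposal is correct and takes essentially the same route as the paper: both rest on the bound of Proposition~\ref{prop: mubound} with $s=n$ forcing $\mu\in\{1,\dots,2n\}$, combined with the observation that $\mu$ cannot equal any label (which the paper dismisses as ``easy to see'' and you spell out via the two cases $\mu=\lambda(a_s)$ and $\mu=\lambda(v_r)$), followed by Proposition~\ref{prop: cycle} to rule out SVML. The only difference is that you make the ``easy to see'' step explicit, which is a welcome addition rather than a deviation.
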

\begin{proof}
Suppose there is an SAML $\lambda$ for $C_n$ with magic constant $\mu$. It is easy to see that $\mu \not\in \{1,2,\ldots,2n\}$. Using $s=n$ for Proposition \ref{prop: mubound} we have $\frac{n+1}{2}\leq \mu \leq \frac{3n+1}{2}$. Contradiction. By Proposition \ref{prop: cycle} $C_n$ also does not have SVML.
\end{proof}

\vspace{2mm}
\begin{theorem}
Dicycle $C_n$ has a strong SA$(n+1,1)$AL and a strong SV$(1,1)$AL.
\end{theorem}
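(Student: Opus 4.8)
The plan is to write down one explicit strong total labeling and check that it certifies both statements at once. Orient $C_n$ cyclically: let $V=\{v_i\}_{i=1}^n$ and $A=\{a_i\}_{i=1}^n$ with $a_i=v_iv_{i+1}$ for $1\le i\le n-1$ and $a_n=v_nv_1$. A useful preliminary observation, which also motivates the construction, is that summing over all arcs the quantity $wt^-(a_i)=\lambda(a_i)+\lambda(v_{i+1})-\lambda(v_i)$ makes the vertex contributions telescope around the cycle, so $\sum_i wt^-(a_i)=\sum_i \lambda(a_i)$; forcing the arc labels to be $\{n+1,\ldots,2n\}$ then matches $\sum_{k=n+1}^{2n}k$ exactly, and the analogous telescoping holds for the vertex-weights. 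This suggests trying $\lambda(v_i)=i$ for $1\le i\le n$ and $\lambda(a_i)=n+i$ for $1\le i\le n$, which is a bijection $V\cup A\to\{1,\ldots,2n\}$ with $\lambda(V)=\{1,\ldots,n\}$, hence strong.

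Next I would compute the subtractive arc-weights. For $1\le i\le n-1$ the arc $a_i=v_iv_{i+1}$ has $wt^-(a_i)=(n+i)+(i+1)-i=n+i+1$, which runs through $n+2,n+3,\ldots,2n$; the wrap-around arc $a_n=v_nv_1$ has $wt^-(a_n)=2n+\lambda(v_1)-\lambda(v_n)=2n+1-n=n+1$. Hence the arc-weights are precisely $\{n+1,n+2,\ldots,2n\}$, an arithmetic progression with first term $n+1$ and common difference $1$, so $\lambda$ is a strong SA$(n+1,1)$AL.

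Finally I would compute the subtractive vertex-weights, using that in a dicycle each $v_i$ has exactly one in-arc, namely $a_{i-1}$ with indices read mod $n$, and one out-arc, namely $a_i$. For $2\le i\le n$ this gives $wt^-(v_i)=\lambda(v_i)+\lambda(a_{i-1})-\lambda(a_i)=i+(n+i-1)-(n+i)=i-1$, running through $1,2,\ldots,n-1$, while $wt^-(v_1)=\lambda(v_1)+\lambda(a_n)-\lambda(a_1)=1+2n-(n+1)=n$. So the vertex-weights are exactly $\{1,2,\ldots,n\}$, an arithmetic progression starting at $1$ with difference $1$, and the same $\lambda$ is therefore a strong SV$(1,1)$AL.

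There is no serious obstacle in this argument; the only point needing a moment's care is isolating the single ``wrapped'' term — the arc $a_n$ for the arc-weights, and the vertex $v_1$ for the vertex-weights — since there the relevant difference of labels jumps rather than staying constant. Once that term is separated out, both weight multisets collapse to the desired consecutive ranges. One could instead give two separate labelings, but packaging them together keeps the bijectivity check to a single line.
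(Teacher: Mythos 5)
Your proposal is correct and takes essentially the same approach as the paper: a single explicit strong labeling with $\lambda(v_i)=i$ and the arc labels filling $\{n+1,\ldots,2n\}$, verified directly to give arc-weights $\{n+1,\ldots,2n\}$ and vertex-weights $\{1,\ldots,n\}$. The only difference is that you assign the arc labels in increasing order around the cycle while the paper uses $\lambda(a_i)=2n-i$ with $\lambda(a_n)=2n$; both verifications are routine and your computations check out.
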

\begin{proof}
Let $V=\{v_i\}_{i=1}^n$ and $A=\{a_i\}_{i=1}^n$ where $a_i=v_iv_{i+1}$ for $1\leq i\leq n-1$ and $a_n=v_nv_1$. Let $\lambda(v_i)=i$ for $1\leq i \leq n$, $\lambda(a_i)=2n-i$ for $1\leq 1 \leq n-1$, and $\lambda(a_n)=2n$. Then:
\begin{align*}
wt^-(a_n) &=\lambda(a_n)+\lambda(v_1)-\lambda(v_n)=n+1\\
wt^-(a_i) &=\lambda(a_i)+\lambda(v_{i+1})-\lambda(v_i)=2n+1-i \quad \text{for } 1\leq i \leq n-1
\end{align*}
Hence $wt^-(a_i)\in \{n+1, n+2, \ldots, 2n\}$ for $1\leq i \leq n$. Also:
\begin{align*}
wt^-(v_n) &= \lambda(v_n)+\lambda(a_{n-1})-\lambda(a_n)=1\\
wt^-(v_i) &= \lambda(v_i)+\lambda(a_{i-1})-\lambda(a_i)=i+1 \quad \text{for } 1\leq i \leq n-1
\end{align*}
Hence $wt^-(v_i)\in \{1, 2, \ldots, n\}$ for $1\leq i \leq n$.
\end{proof}

\section{Stars}
Let $S_n$ denotes a directed star (distar) with $v_0$ as the center and $v_1,v_2,\ldots, v_n$ as the leaves.

\begin{theorem}
Distar $S_n$ has a strong SAML with magic constant $\mu=2(n+1)$ for all $n\geq 1$.
\end{theorem}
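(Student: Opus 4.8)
The plan is to write down an explicit strong total labeling by hand, which is feasible because the star is so small: $|V|+|A|=(n+1)+n=2n+1$, and in a strong labeling the vertices must receive the labels $\{1,\dots,n+1\}$ while the arcs receive $\{n+2,\dots,2n+1\}$. First I would fix the orientation so that every arc leaves the center, i.e. $A=\{v_0v_i:1\le i\le n\}$, so that each subtractive arc-weight has the uniform shape $wt^-(v_0v_i)=\lambda(v_0v_i)+\lambda(v_i)-\lambda(v_0)$. The key design choice is to put the smallest label on the center: this keeps all arc-weights equal to a single large value and leaves the consecutive block $\{2,\dots,n+1\}$ free for the $n$ leaves, while the complementary block $\{n+2,\dots,2n+1\}$ is exactly what the arcs need.

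Concretely I would set $\lambda(v_0)=1$, $\lambda(v_i)=i+1$ for $1\le i\le n$, and $\lambda(v_0v_i)=2n+2-i$ for $1\le i\le n$. The first thing to check is that this is a strong total labeling: the leaf labels range over $\{2,\dots,n+1\}$, the arc labels range over $\{n+2,\dots,2n+1\}$, and together with $\lambda(v_0)=1$ this gives a bijection $V\cup A\to\{1,\dots,2n+1\}$ with $\lambda(V)=\{1,\dots,n+1\}$. Then the magic property is a one-line computation: for every $i$,
$$wt^-(v_0v_i)=\lambda(v_0v_i)+\lambda(v_i)-\lambda(v_0)=(2n+2-i)+(i+1)-1=2n+2,$$
which is independent of $i$, so $\mu=2(n+1)$ as claimed. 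The boundary case $n=1$ is not special: the star is the single arc $v_0v_1$ with $\lambda(v_0)=1$, $\lambda(v_1)=2$, $\lambda(v_0v_1)=3$, and weight $4=2(1+1)$.

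There is no genuinely hard step here; the only point requiring care is coordinating the choice of orientation with the placement of the center's label, since changing the orientation forces a different assignment in order to keep all arc-weights equal. For instance, orienting every arc toward the center instead leads to the equally valid labeling $\lambda(v_0)=n+1$, $\lambda(v_i)=i$, $\lambda(v_iv_0)=n+1+i$, again with constant subtractive arc-weight $2(n+1)$; I would mention this as a natural variant but carry out only the first construction in full detail, leaving the remaining orientations and the duals to the reader as is done elsewhere in the paper.
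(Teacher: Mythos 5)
Your construction is exactly the paper's: $\lambda(v_0)=1$, $\lambda(v_i)=i+1$, arcs $v_0v_i$ labeled $2(n+1)-i$, giving constant weight $2(n+1)$. The proposal is correct and takes essentially the same approach (with a bit of extra verification of the strong/bijection property and a valid dual-orientation variant).
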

\begin{proof}
Let $\lambda(v_0)=1$ and $\lambda(v_i)=i+1$ for $1\leq i \leq n$.\\
Let $a_i$ denotes the arc $v_0v_i$ and let $\lambda(a_i)=2(n+1)-i$ for $1\leq i \leq n$.\\
Then $wt^-(a_i)=\lambda(a_i)+\lambda(v_i)-\lambda(v_0)=2(n+1)$.
\end{proof}

\vspace{2mm}
\begin{theorem}
Distar $S_n$ has an SA$(2n+2,2)$AL for all $n$.
\end{theorem}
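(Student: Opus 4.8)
The plan is to exhibit an explicit total labeling that is essentially a perturbation of the magic construction from the previous theorem, chosen so that the arc-weights spread out into the arithmetic progression $2n+2, 2n+4, \ldots, 4n$. Recall that for $S_n$ we have $|V|+|A| = 2n+1$, so the labels run over $\{1,2,\ldots,2n+1\}$. Since the desired smallest weight $2n+2$ already exceeds $2n+1$, no arc-weight can be achieved using the arc label alone, which is consistent with every arc weight being a genuine combination $\lambda(a_i)+\lambda(v_i)-\lambda(v_0)$.

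First I would set $\lambda(v_0)=1$ for the center, and then partition the remaining labels $\{2,3,\ldots,2n+1\}$ into the leaf labels and the arc labels. A natural choice is $\lambda(v_i)=i+1$ for $1\le i\le n$ (so leaves get $2,\ldots,n+1$) and $\lambda(a_i)=2n+2-i$ for $1\le i\le n$ (so arcs get $n+2,\ldots,2n+1$), where $a_i$ is the arc $v_0v_i$. Then $wt^-(a_i)=\lambda(a_i)+\lambda(v_i)-\lambda(v_0)=(2n+2-i)+(i+1)-1=2n+2$, which is constant — that is just the magic labeling again, not what we want. To break the constancy into an arithmetic progression with difference $2$, I would instead pair the labels so that the leaf contribution and the arc contribution move in the same direction rather than cancelling: e.g. keep $\lambda(v_i)=i+1$ but set $\lambda(a_i)=2n+2-i$ replaced by an assignment where larger leaves receive larger arc labels. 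Concretely, try $\lambda(v_i)=i+1$ and $\lambda(a_i)=n+1+i$ for $1\le i\le n$; then the arc labels are exactly $\{n+2,\ldots,2n+1\}$ as before, and $wt^-(a_i)=(n+1+i)+(i+1)-1=n+1+2i$, giving weights $n+3, n+5,\ldots,3n+1$. That is an $(n+3,2)$ progression — the right common difference but the wrong starting value, so a final rescaling of which labels play which role is needed. The cleanest route is to keep $\lambda(v_0)=1$, assign the \emph{large} labels $\{n+2,\ldots,2n+1\}$ to the leaves in increasing order, $\lambda(v_i)=n+1+i$, and the \emph{small} labels $\{2,\ldots,n+1\}$ to the arcs, $\lambda(a_i)=i+1$; then $wt^-(a_i)=(i+1)+(n+1+i)-1=n+1+2i$, still $(n+3,2)$. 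To hit $2n+2$ as the start I would instead send $v_0$ to a middle label; the real degree of freedom is that $wt^-(a_i)=\lambda(a_i)+\lambda(v_i)-\lambda(v_0)$ and I need these $n$ values to be $2n+2,2n+4,\ldots,4n$, i.e. congruent and spaced by $2$, with $\lambda(v_0)$ fixed. Taking $\lambda(v_0)=1$ forces $\lambda(a_i)+\lambda(v_i)=2n+3,2n+5,\ldots,4n+1$; choosing $\lambda(v_i)=i+1$ and $\lambda(a_i)=2n+2-i$ from the old construction gives $\lambda(a_i)+\lambda(v_i)=2n+3$ always, whereas choosing $\lambda(v_i)=i+1$, $\lambda(a_i)=2n+2-i$ shifted so that leaf $i$ pairs with arc $n+1-i$ in reversed order, i.e. $\lambda(a_i)=n+2+(i-1)=n+1+i$, yields $\lambda(a_i)+\lambda(v_i)=n+2i+2$; I then pick which index gets which to realize the sums $2n+3,\ldots,4n+1$, which is possible precisely because $\{n+2i+2 : 1\le i\le n\}$ and $\{2n+1+2i:1\le i\le n\}$ are both arithmetic with difference $2$ and can be matched by a permutation of indices.

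So the concrete construction I would write down and then verify is: $\lambda(v_0)=1$; $\lambda(v_i)=i+1$ for $1\le i\le n$; and $\lambda(a_i)=2n+2-i$ for $1\le i\le n$ — no: as computed this is magic. The version that works is $\lambda(v_0)=1$, $\lambda(v_i)=i+1$, $\lambda(a_{\sigma(i)})=$ a suitable relabeling; the honest statement is that I would search among the $O(1)$ natural pairings of the two halves $\{2,\ldots,n+1\}$ and $\{n+2,\ldots,2n+1\}$ for leaves versus arcs, compute $wt^-(a_i)$ in each case, and report the one whose weight set is $\{2n+2,2n+4,\ldots,4n\}$ — e.g. $\lambda(v_0)=1$, $\lambda(v_i)=n+1+i$ (leaves get the large half), $\lambda(a_i)=2n+2-i$ reversed to $\lambda(a_i)=i+1$ (arcs get the small half, in the \emph{same} order as the leaves), giving $wt^-(a_i)=(i+1)+(n+1+i)-1=n+1+2i\in\{n+3,\ldots,3n+1\}$; to push the base up to $2n+2$ one reverses the pairing to $\lambda(a_i)=n+2-i$... and one iterates until $\lambda(a_i)+\lambda(v_i)-1$ lands on the target. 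The main obstacle is purely bookkeeping: making sure the chosen pair of bijections uses each label in $\{1,\ldots,2n+1\}$ exactly once while the induced arc-weight set is exactly the prescribed arithmetic progression $\{2n+2, 2n+4,\ldots,4n\}$; there is no structural difficulty, since $S_n$ is a tree with all arcs sharing the center $v_0$, so each weight depends on only two free labels beyond the fixed $\lambda(v_0)$, and the progression has $n$ terms matching the $n$ arcs. Once the right bijection is pinned down the verification is a one-line arithmetic identity as in the preceding star theorem, and I would also note (via Proposition \ref{prop: dualsaml}-style duality, adapted to the antimagic setting by the same complementation) that the dual labeling yields the companion $(2n+2,2)$ or reversed progression, which I leave to the reader per the paper's stated convention.
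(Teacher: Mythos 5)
There is a genuine gap: your proposal never actually produces a labeling, and the framework you commit to cannot produce one. You fix the orientation so that every arc points \emph{away} from the center ($a_i=v_0v_i$, hence $wt^-(a_i)=\lambda(a_i)+\lambda(v_i)-\lambda(v_0)$) and you fix $\lambda(v_0)=1$, then search over pairings of the remaining labels. But summing the weights over all $n$ arcs gives
\[
\sum_{i=1}^{n} wt^-(a_i)=\Bigl(\sum_{j=1}^{2n+1} j-\lambda(v_0)\Bigr)-n\lambda(v_0)=(n+1)\bigl(2n+1-\lambda(v_0)\bigr),
\]
while the target set $\{2n+2,2n+4,\ldots,4n\}$ sums to $n(3n+1)$. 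Since $\gcd(n,n+1)=1$ and $(n+1)\nmid(3n+1)$ for $n\geq 2$, no choice of $\lambda(v_0)$ --- let alone $\lambda(v_0)=1$, which gives total $2n^2+2n$ versus the required $3n^2+n$ --- can realize this progression with that orientation. So the assertion that ``there is no structural difficulty'' and that iterating over pairings will eventually land on the target is false: every pairing you try correctly lands on an $(n+3,2)$ progression, and that is all this orientation can give you.

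The missing idea is to reverse the orientation and the role of the center's label: the paper takes $a_i=v_iv_0$ (center as head), so $wt^-(a_i)=\lambda(a_i)+\lambda(v_0)-\lambda(v_i)$ and the center's label is \emph{added} $n$ times rather than subtracted. With $\lambda(v_0)=2n+1$, $\lambda(v_i)=i$, $\lambda(a_i)=2n+1-i$, one gets $wt^-(a_i)=4n+2-2i$, which is exactly $\{2n+2,2n+4,\ldots,4n\}$. Note also that your closing appeal to a Proposition~\ref{prop: dualsaml}--style duality is unsupported in the antimagic setting; complementation negates the differences $\lambda(v_i)-\lambda(v_0)$, so it does not simply hand you a companion progression without a separate check.
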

\begin{proof}
Let $\lambda(v_0)=2n+1$ and $\lambda(v_i)=i$ for $1\leq i \leq n$.\\
Let $a_i$ denotes the arc $v_iv_0$ and let $\lambda(a_i)=2n+1-i$ for $1\leq i \leq n$.\\
Then $wt^-(a_i)=\lambda(a_i)+\lambda(v_0)-\lambda(v_i)=4n+2-2i$ for $1\leq i \leq n$.\\
So we have $wt^-(a_i)\in \{2n+2,2n+4,\ldots,4n\}$.
\end{proof}

\vspace{2mm}
\begin{theorem}
Distar $S_n$ has a strong$^*$ SVAL for all $n$.
\end{theorem}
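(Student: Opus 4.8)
First I would set up the labels so that $\lambda(A)=\{1,2,\ldots,n\}$ (this is the strong$^*$ requirement) and $\lambda(V)=\{n+1,n+2,\ldots,2n+1\}$. The star $S_n$ has $n+1$ vertices and $n$ arcs, so I have $n+1$ vertex-labels to distribute among the center $v_0$ and the leaves $v_1,\ldots,v_n$. The key structural feature of a star is that the subtractive vertex-weight of a leaf $v_i$ involves only $\lambda(v_i)$ and the single arc label on the arc between $v_0$ and $v_i$ (with sign depending on orientation), while the vertex-weight of the center $v_0$ involves $\lambda(v_0)$ together with \emph{all} $n$ arc labels (with signs depending on orientation). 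Since the arc labels are forced to be exactly $1,\ldots,n$, the contribution of the arcs to $wt^-(v_0)$ is controlled: if $k$ arcs point into $v_0$ and the rest point out, then $wt^-(v_0)=\lambda(v_0)+(\text{sum of in-arc labels})-(\text{sum of out-arc labels})$, which for a fixed partition of $\{1,\ldots,n\}$ can be made to range over a predictable set.

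The cleanest approach is probably to orient every arc as $a_i=v_0v_i$ (all arcs out of the center), so that $wt^-(v_0)=\lambda(v_0)-\sum_{i=1}^n\lambda(a_i)=\lambda(v_0)-\binom{n+1}{2}$ is a single fixed value, and $wt^-(v_i)=\lambda(v_i)+\lambda(a_i)$ for each leaf. Then I would try to pick $\lambda(a_i)=i$ and match leaf-labels $\lambda(v_i)$ from $\{n+1,\ldots,2n\}$ to the arcs so that the $n$ sums $\lambda(v_i)+\lambda(a_i)$ are all distinct — e.g.\ take $\lambda(v_i)=2n+1-i$ so that $wt^-(v_i)=2n+1$ for all $i$, which is \emph{not} what I want, so instead I would pair to spread the sums out, say $\lambda(v_i)=n+i$ giving $wt^-(v_i)=n+2i$, a set of $n$ distinct even-ish values. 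Finally I set $\lambda(v_0)=2n+1$, giving $wt^-(v_0)=2n+1-\binom{n+1}{2}$, and I must only check that this one center-weight is distinct from all $n$ leaf-weights; since the leaf-weights are roughly in the range $[n+2,3n]$ and the center-weight is negative for $n\ge 3$, distinctness is immediate for large $n$ and can be checked by hand for small $n$.

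The main obstacle is simply bookkeeping: ensuring the single center-weight avoids collision with the $n$ leaf-weights, and handling the small cases $n=1,2$ separately if the generic construction happens to collide there. A secondary subtlety is that the problem only asks for an SVAL (distinct weights), not an SV$(a,d)$AL, so I have a lot of freedom and do not need the weights to form an arithmetic progression — this makes the construction much easier than the earlier arc-antimagic results, and I would exploit that freedom rather than fighting for an arithmetic sequence. I would write out the explicit labeling, verify the leaf-weight formula and the center-weight formula in one display each, and close by observing the $n+1$ weights are pairwise distinct.
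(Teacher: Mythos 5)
Your construction is correct and is essentially the paper's proof in mirror image: the paper orients every arc into the center (so $wt^-(v_0)=\frac{1}{2}(n+1)(n+2)$ sits far above the leaf weights $2i-1$), while you orient every arc out of the center (so $wt^-(v_0)=2n+1-\binom{n+1}{2}$ sits strictly below the leaf weights $n+2i$ for every $n\geq 1$, so no small cases actually need separate treatment); in both versions the whole argument is block labeling plus a one-line magnitude separation. A small bonus of your variant: your arc labels are literally $\{1,\ldots,n\}$, whereas the paper's arc labels are $\{2,\ldots,n+1\}$, so your labeling satisfies the strong$^*$ condition exactly as defined.
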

\begin{proof}
Let $\lambda(v_0)=1$ and $\lambda(v_i)=n+1+i$ for $1\leq i \leq n$.\\
Let $a_i$ denotes the arc $v_iv_0$ and let $\lambda(a_i)=n+2-i$ for $1\leq i \leq n$. Then
\begin{align*}
wt^-(v_0)&=\lambda(v_0)+\sum_i \lambda(a_i)=1+2+\ldots+(n+1)=\frac{1}{2}(n+1)(n+2)\\
wt^-(v_i)&=\lambda(v_i)-\lambda(a_i)=2i-1 \quad \text{for }1\leq i \leq n
\end{align*}
Hence $wt^-(v_i)\in \{1,3,5,\ldots,2n-1\}$ for $1\leq i \leq n$.\\
Since $\frac{1}{2}(n+1)(n+2)>2n-1$ for all $n$, we have all $wt^-$ are distinct.
\end{proof}

\vspace{2mm}
\begin{con}
Distar $S_n$ does not have an SVML for any $n$.
\end{con}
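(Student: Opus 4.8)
The plan is to assume for contradiction that $S_n$ admits an SVML $\lambda\colon V\cup A\to\{1,\dots,2n+1\}$ with magic constant $\mu$, and to reach a contradiction via one global count together with a case split on the size of $\mu$. First I would fix notation: let $a_i$ be the unique arc between $v_0$ and $v_i$, and split the leaf indices into $O=\{i:a_i=v_0v_i\}$ and $I=\{i:a_i=v_iv_0\}$. The magic condition at a leaf then reads $\lambda(v_i)=\mu-\lambda(a_i)$ for $i\in O$ and $\lambda(v_i)=\mu+\lambda(a_i)$ for $i\in I$, while at the center it reads $\lambda(v_0)+\sum_{i\in I}\lambda(a_i)-\sum_{i\in O}\lambda(a_i)=\mu$.

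Next I would invoke the general identity $\sum_{x\in V}wt^-(x)=\sum_{x\in V}\lambda(x)$ (each arc label occurs once with a $+$ at its head and once with a $-$ at its tail, so the arc contributions cancel). The left side equals $(n+1)\mu$, so $\sum_{x\in V}\lambda(x)=(n+1)\mu$; subtracting from $\sum_{x\in V\cup A}\lambda(x)=(n+1)(2n+1)$ gives the key identity $\sum_{i=1}^n\lambda(a_i)=(n+1)(2n+1-\mu)$. Since the $n+1$ vertex labels are distinct elements of $\{1,\dots,2n+1\}$ we also get $\tfrac{n+2}{2}\le\mu\le\tfrac{3n+2}{2}$, so $\mu$ is itself one of the $2n+1$ labels. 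Now I split into two cases.

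If $\mu\le n$: the leaf equations force every arc label to be at most $2n+1-\mu$ --- for $i\in I$ because $\lambda(v_i)=\mu+\lambda(a_i)\le 2n+1$, and for $i\in O$ because $\lambda(a_i)\le\mu-1<2n+1-\mu$. Hence the $n$ arc labels are distinct elements of $\{1,\dots,2n+1-\mu\}$, so $\sum_{i=1}^n\lambda(a_i)$ is at most the sum $\tfrac12 n(3n+3-2\mu)$ of the $n$ largest such values; comparing with $\sum_{i=1}^n\lambda(a_i)=(n+1)(2n+1-\mu)$ and simplifying yields $\mu\ge\tfrac12(n+1)(n+2)$, impossible since $\tfrac12(n+1)(n+2)>n$ for every $n\ge1$. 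If $\mu\ge n+1$: I would first show $\lambda(v_0)=\mu$ by ruling out every other label as a candidate for the value $\mu$ --- the $O$-labels are $\le\mu-1$, the $I$-leaf labels are $\ge\mu+1$, and the $I$-arc labels are $\le 2n+1-\mu\le n<\mu$. Then the labels exceeding $\mu$ are exactly $\{\lambda(v_i):i\in I\}$, which pins down $|I|=2n+1-\mu$, forces $\{\lambda(a_i):i\in I\}=\{1,\dots,|I|\}$, and leaves the remaining $2|O|$ labels to be exactly $\{|I|+1,\dots,\mu-1\}$, which must partition into $|O|$ pairs summing to $\mu$ (one pair $\{\lambda(v_i),\lambda(a_i)\}$ per $i\in O$). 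But the largest of those labels, $\mu-1$, would have to be paired with $1\notin\{|I|+1,\dots,\mu-1\}$ unless $|I|=0$; and both $I=\emptyset$ and $O=\emptyset$ contradict the center equation, which then reads $\lambda(v_0)$ plus or minus a strictly positive sum of arc labels equals $\mu=\lambda(v_0)$. This exhausts all cases.

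I expect the case $\mu\ge n+1$ to be the main obstacle: the summation bound that disposes of $\mu\le n$ no longer yields a contradiction there, since the $O$-arcs may now carry large labels, so one must extract finer information about which labels sit above and below $\mu$. The realization that makes it work is that $\lambda(v_0)$ is forced to equal $\mu$; after that it is a counting/pairing argument. Minor care points are to confirm $\mu\in\{1,\dots,2n+1\}$ (so that ``some label equals $\mu$'' is meaningful) and to dispatch the degenerate orientations $I=\emptyset$ and $O=\emptyset$ directly from the center's weight equation. The routine algebra --- expanding $(n+1)(2n+1-\mu)\le\tfrac12 n(3n+3-2\mu)$, and the bookkeeping of the label multiset in the second case --- I would leave to the final write-up.
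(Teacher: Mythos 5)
Your argument is correct, and it is worth emphasizing that it proves strictly more than the paper does: the paper records this statement only as a conjecture, and the discussion following it is an avowedly incomplete argument that eliminates the possibility $\mu=\lambda(v_iv_0)$ and then stops with ``the only case left is when $\lambda(v_0)=\mu$.'' Your proof closes exactly that gap. The shared part is the elimination showing that the value $\mu$ can only be carried by the center; but where the paper organizes its cases by \emph{which object is labeled $\mu$}, you split on the \emph{size} of $\mu$, and each branch gets a genuinely new ingredient. The identity $\sum_{x\in V}wt^-(x)=\sum_{x\in V}\lambda(x)$, hence $\sum_i\lambda(a_i)=(n+1)(2n+1-\mu)$, is not used in the paper at all; it is what makes the branch $\mu\le n$ collapse, since all $n$ arc labels are then confined to $\{1,\dots,2n+1-\mu\}$ and the resulting inequality forces $\mu\ge\tfrac12(n+1)(n+2)>n$. (I checked the algebra: $2(n+1)(2n+1-\mu)\le n(3n+3-2\mu)$ does simplify to $2\mu\ge n^2+3n+2$.) In the branch $\mu\ge n+1$ with $\lambda(v_0)=\mu$ --- the paper's open case --- your counting of the labels above $\mu$ correctly pins down $|I|=2n+1-\mu$ and $\{\lambda(a_i):i\in I\}=\{1,\dots,|I|\}$, so the leftover interval $\{|I|+1,\dots,\mu-1\}$ must split into pairs summing to $\mu$, and the element $\mu-1$ has no admissible partner once $1$ is already consumed as an $I$-arc label; the degenerate orientations $I=\emptyset$ and $O=\emptyset$ fall to the center's weight equation as you say. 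What your route buys is a complete proof of the conjecture; what the paper's route bought was only a reduction. When you write this up, the only points needing explicit care are the ones you already flagged: that $\mu$ is an integer in $\{1,\dots,2n+1\}$ (which follows from $\tfrac{n+2}{2}\le\mu\le\tfrac{3n+2}{2}$), and that $\mu-1$ really lies in the leftover interval when $O\ne\emptyset$ (it does, since that interval is nonempty and ends at $\mu-1$).
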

\vspace{-4mm}
\noindent Suppose there is an SVML for $S_n$ with magic constant $\mu$. One can verify that the center $v_0$ can not be exclusive tail or exclusive head. Since $v_0$ serves as head for some arc(s), then the label of the tail of those arc(s) forces $\mu<2n+1$, that is, $\mu$ is one of the label. Hence $\mu$ can only be either the label of an arc $v_iv_0$ ($v_0$ as head), or the label of $v_0$. If wlog $\mu=\lambda(v_1v_0)$, then $\lambda(v_1)=2\mu$ and so $\mu \leq n$. Taking the other $n-1$ pairs $(\lambda(v_i),\lambda(a_i))$ from $\{1,2,\ldots,2n+1\}-\{\mu,2\mu\}$ for the star's legs, we have the leftover label for $v_0$ is in $\{n+1,n+2,\ldots,2n+1\}-\{\mu\}$. But then $wt^-(v_0)>n$ since there are more arcs with $v_0$ as head, and the arcs with $v_0$ as tails has smaller labels, and so $wt^-(v_0)>\mu$. Contradiction. The only case left is when $\lambda(v_0)=\mu$.\\
\section{Wheels}

Let $W_n$ denotes a directed wheel with $V(W_n)=\{v_0,v_1,\ldots,v_n\}$, where $v_0$ is the center. 

\begin{theorem}
Directed wheel $W_n$ has an SVAL labeling for $n\geq 3$. For $n=3$, it is an SV$(n+1,2)$AL.
\end{theorem}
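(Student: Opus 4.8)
The plan is to exhibit a single orientation of $W_n$ together with an explicit total labeling and then verify that the $n+1$ subtractive vertex-weights are pairwise distinct; for $n=3$ the same labeling will turn out to yield an arithmetic progression. Orient every spoke toward the center, $s_i=v_iv_0$, and orient the rim as a directed cycle, $r_i=v_iv_{i+1}$ for $1\leq i\leq n-1$ and $r_n=v_nv_1$, with the convention $r_0:=r_n$. Then $wt^-(v_0)=\lambda(v_0)+\sum_{i=1}^n\lambda(s_i)$ and, for a rim vertex, $wt^-(v_i)=\lambda(v_i)+\lambda(r_{i-1})-\lambda(r_i)-\lambda(s_i)$. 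The idea is to split $\{1,\ldots,3n+1\}$ into four consecutive blocks and dedicate them to the four kinds of elements: the label $1$ to $v_0$, the block $\{2,\ldots,n+1\}$ to the spokes, the block $\{n+2,\ldots,2n+1\}$ to the rim arcs, and the block $\{2n+2,\ldots,3n+1\}$ to the rim vertices. This is a partition of $\{1,\ldots,3n+1\}=\{1,\ldots,|V|+|A|\}$ since $|V|+|A|=(n+1)+2n=3n+1$.

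Within the rim-arc block I would place the labels in increasing cyclic order, $\lambda(r_i)=n+1+i$, so that $\lambda(r_{i-1})-\lambda(r_i)=-1$ for every $i\geq 2$ and the only wrap-around jump, $\lambda(r_n)-\lambda(r_1)=n-1$, sits at $v_1$. After this telescoping the rim-vertex weight depends only on $\lambda(v_i)-\lambda(s_i)$, so taking $\lambda(v_i)=2n+1+i$ and simply reusing the spoke block in the same increasing order would make every $wt^-(v_i)$ with $i\geq 2$ equal to $2n-1$; the fix is to permute the spoke labels. Putting the smallest spoke label on $v_1$ and a reversal on the rest, $\lambda(s_1)=2$ and $\lambda(s_i)=n+3-i$ for $2\leq i\leq n$, a direct substitution gives $wt^-(v_1)=3n-1$ and $wt^-(v_i)=n+2i-3$ for $2\leq i\leq n$, so $\{wt^-(v_i):1\leq i\leq n\}=\{n+1,n+3,\ldots,3n-1\}$ is an arithmetic progression of common difference $2$, while $wt^-(v_0)=1+\sum_{i=2}^{n+1}i=\tfrac{(n+1)(n+2)}{2}$.

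The only thing left to check---and the only step that is not a routine substitution---is that the center weight does not collide with a rim weight. The rim weights all lie in $[n+1,3n-1]$, and $\tfrac{(n+1)(n+2)}{2}>3n-1$ is equivalent to $n^2-3n+4>0$, which holds for all $n$ (its discriminant is negative); hence $wt^-(v_0)$ lies strictly above the entire progression and the labeling is an SVAL for every $n\geq 3$. Finally, for $n=3$ one computes $\tfrac{(n+1)(n+2)}{2}=10=3n+1$, exactly the next term after $3n-1=8$ in the progression $4,6,8$; so the four weights are $4,6,8,10$, an arithmetic progression with first term $n+1=4$ and difference $2$, i.e.\ an SV$(n+1,2)$AL. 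For $n\geq 4$ the center weight overshoots, so this labeling is an SVAL but not an SV$(a,d)$AL.
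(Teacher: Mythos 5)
Your proposal is correct and takes essentially the same approach as the paper: the same orientation (spokes directed into the center, rim a directed cycle), the same partition of $\{1,\ldots,3n+1\}$ into consecutive blocks for the center, spokes, rim arcs, and rim vertices, the same resulting rim-weight set $\{n+1,n+3,\ldots,3n-1\}$ with center weight $\tfrac{1}{2}(n+1)(n+2)$, and the same separating inequality plus the $n=3$ check. The only difference is which block you internally permute to break the ties (you reverse the spoke labels, the paper shuffles the rim labels), which is immaterial.
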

\begin{proof}
Let $A(W_n)=\{a_1,a_2,\ldots,a_n,b_1,b_2,\ldots,b_n\}$ where $a_i=v_iv_0$ for $1\leq i \leq n$, $b_i=v_iv_{i+1}$ for $1\leq i \leq n-1$, and $b_n=v_nv_1$.\\
Let $\lambda(v_0)=1$ and $\lambda(v_i)=3n+1-i$ for $1\leq i \leq n-1$, and $\lambda(v_n)=3n+1$.\\
Let $\lambda(a_i)=i+1$ for $1\leq i \leq n$, $\lambda(b_i)=n+2+i$ for $1\leq i \leq n-1$, and $\lambda(b_n)=n+2$. Then
\begin{align*}
wt^-(v_0) &= \lambda(v_0)+\sum_{i} \lambda(a_i)=\frac{1}{2}(n+1)(n+2)\\
wt^-(v_i) &= \lambda(v_i)+\lambda(b_{i-1})-\lambda(b_i)-\lambda(a_i)=3n+1-2i \quad \text{for } 1\leq i \leq n-1\\
wt^-(v_n) &= \lambda(v_n)+\lambda(b_{n-1})-\lambda(b_n)-\lambda(a_n)=3n-1
\end{align*}
Hence $\{wt^-(v_i)\}_{i=1}^n=\{n+1,n+3,\ldots,3n-1\}$. Since $\frac{1}{2}(n+1)(n+2)>3n-1$ for all $n\geq 3$, we have all $wt^-$ are distinct. For $n=3$, $\frac{1}{2}(n+1)(n+2)=10$ while $3n-1=8$, so we have an SV$(n+1,2)$AL. For larger $n$, $wt^-(v_0)=\frac{1}{2}(n+1)(n+2)$ grows faster as $n$ increases.
\end{proof}

\section{Tadpoles}

Let $(n,t)$-tadpole denotes a directed tadpole obtained by connecting a dicycle $C_n$ and a dipath $P_t$ using an arc. Let the vertex set of the cycle part be $\{v_i\}_{i=1}^n$ and the vertex set of the path part be $\{u_i\}_{i=1}^t$. Let the arc set be $\{a_i\}_{i=1}^n \cup \{b_i\}_{i=1}^{t-1} \cup \{c\}$ where $a_i=v_iv_{i+1}$ for $1\leq i\leq n-1$, $a_n=v_nv_1$, $b_i=u_iu_{i+1}$, and $c=u_tv_1$.\\

\begin{theorem}
$(n,t)$-tadpole has a strong SAAL.
\end{theorem}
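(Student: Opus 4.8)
The plan is to construct an explicit strong total labeling (so vertices get $\{1,\ldots,n+t\}$ and arcs get $\{n+t+1,\ldots,2n+2t+1\}$, noting $|V|+|A|=(n+t)+(n+t+1)=2n+2t+1$) and then verify that the $2n+2t+1$ subtractive weights — which here are the $n+t$ arc-weights together with... wait, no: for an SAAL we only need the \emph{arc}-weights to be distinct, and there are $n+t$ arcs, so I need $n+t$ pairwise distinct arc-weights. I would treat the cycle part and the path part somewhat separately. On the path part $u_1,\ldots,u_t$ with arcs $b_i=u_iu_{i+1}$ and the connecting arc $c=u_tv_1$, I would mimic the successful $P_n$ construction: give the $u_i$ small consecutive labels and the $b_i$ large labels in decreasing order so that $wt^-(b_i)=\lambda(b_i)+\lambda(u_{i+1})-\lambda(u_i)$ runs through a block of consecutive integers. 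On the cycle part, recall that a dicycle alone cannot have all equal weights (the telescoping sum of $\lambda(a_i)+\lambda(v_{i+1})-\lambda(v_i)$ around the cycle equals $\sum\lambda(a_i)$, which is fixed), but distinctness is fine; the earlier strong SA$(n+1,1)$AL construction for $C_n$ shows how to get consecutive weights there too. The main work is to interleave the label ranges of the two pieces and choose the label of the connecting arc $c$ so that the three families of weights — cycle-arc weights, path-arc weights, and the single weight $wt^-(c)=\lambda(c)+\lambda(v_1)-\lambda(u_t)$ — are mutually disjoint.

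Concretely, I would try: assign vertices so that the cycle vertices $v_1,\ldots,v_n$ receive $\{1,\ldots,n\}$ (say $\lambda(v_i)=i$) and the path vertices $u_1,\ldots,u_t$ receive $\{n+1,\ldots,n+t\}$ in decreasing order along the path, $\lambda(u_i)=n+t+1-i$, so that $\lambda(u_{i+1})-\lambda(u_i)=-1$ on each $b_i$ and $\lambda(v_1)-\lambda(u_t)=1-(n+1)=-n$ on $c$. Then assign the $2n+2t+1$... actually $n+t+1$ arc labels from the top block $\{n+t+1,\ldots,2n+2t+1\}$: give the cycle arcs $a_1,\ldots,a_n$ one sub-block in decreasing order (as in the $C_n$ theorem, with $a_n$ getting a specially chosen value to land its weight consecutively), give the path arcs $b_1,\ldots,b_{t-1}$ the next sub-block in decreasing order, and give $c$ the remaining single label. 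With $\lambda(a_i)$ decreasing by $1$, $wt^-(a_i)=\lambda(a_i)+1$ steps through consecutive values; with $\lambda(b_i)$ decreasing by $1$, $wt^-(b_i)=\lambda(b_i)-1$ likewise; and $wt^-(c)=\lambda(c)-n$ is a single value. I would then just read off the three resulting intervals of integers and check they are disjoint, adjusting the order of the sub-blocks (cycle-arcs high, path-arcs low, or vice versa) and the placement of $\lambda(c)$ and $\lambda(a_n)$ as needed.

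The main obstacle I anticipate is exactly this bookkeeping of disjointness, compounded by the cycle's rigidity: because $\sum_i wt^-(a_i)=\sum_i\lambda(a_i)$ is forced once the cycle-arc labels are chosen, I cannot freely slide the whole block of cycle-arc weights — its total is pinned — so I must choose \emph{which} labels go to the cycle arcs (not just their internal order) to position that block correctly relative to the path-arc block and the lone weight of $c$. A secondary subtlety is the ``wrap-around'' arc $a_n=v_nv_1$, whose weight $\lambda(a_n)+\lambda(v_1)-\lambda(v_n)=\lambda(a_n)+1-n$ behaves differently from the others and typically needs $\lambda(a_n)$ to be the largest arc label (as in the $C_n$ proof, where it got the value $2n$) to fall into line; I would handle it first, then fit everything else around it. Once the labels are nailed down, the verification is the same kind of routine telescoping computation already carried out for $P_n$ and $C_n$ in the earlier theorems, so I expect the write-up to consist of a single explicit labeling followed by a short case-free check that the (at most three) weight-intervals are pairwise disjoint.
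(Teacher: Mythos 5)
Your plan is essentially the paper's own proof: an explicit strong labeling in which the vertex labels $\{1,\ldots,n+t\}$ are split into consecutive blocks for the cycle and path parts, the arc labels are handed out in monotone consecutive sub-blocks (with the wrap-around arc $a_n$ and the connector $c$ treated specially) so that each family of arc-weights telescopes into a run of consecutive integers, and one then checks the three runs are pairwise disjoint --- exactly as in the paper, where the cycle-arc weights, the weight of $c$, and the path-arc weights form disjoint intervals with one value skipped, giving an SAAL that is not an SA$(a,d)$AL. The only slip to fix before writing out the final labeling is the arc count: $|A|=n+(t-1)+1=n+t$, not $n+t+1$, so the arc labels occupy $\{n+t+1,\ldots,2n+2t\}$; with that corrected your block assignment (e.g.\ $\lambda(a_n)$ the largest arc label, $\lambda(c)$ the smallest) does yield pairwise disjoint weight sets for all $n,t$.
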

\begin{proof}
Let $\lambda(v_1)=t+1$, $\lambda(v_i)=n+t+2-i$ for $2\leq i \leq n$, and $\lambda(u_i)=i$ for $1 \leq i \leq t$.\\
Let $\lambda(a_i)=n+t+i$ for $1\leq i \leq n$, $\lambda(b_i)=2n+2t+2-i$ for $1\leq i \leq t$, and $\lambda(c)=2n+t+1$. Then
\begin{align*}
wt^-(a_1) &= \lambda(a_1)+\lambda(v_2)-\lambda(v_1)=2n+t\\
wt^-(a_i) &= \lambda(a_i)+\lambda(v_{i+1})-\lambda(v_i)=n+t-1+i \quad \text{for }2\leq i\leq n\\
wt^-(c) &= \lambda(c)+\lambda(v_1)-\lambda(u_t)=2n+t+2\\
wt^-(b_i) &= \lambda(b_i)+\lambda(u_i)-\lambda(u_{i-1})=2n+2t+3-i
\end{align*}
Hence we have $\{wt^-(a_i)\}_{i=1}^n=\{n+t+1,n+t+2,\ldots,2n+t\}$ and $\{wt^-(c)\}\cup \{wt^-(b_i)\}_{i=1}^t=\{2n+t+2,2n+t+3,\ldots,2n+2t+2\}$. Since the weight $2n+t+1$ does not exist then $\lambda$ is an SAAL but not an SA$(a,d)$AL.
\end{proof}

\vspace{2mm}
\begin{theorem}
$(n,t)$-tadpole has a strong$^*$ SV$(n+t+1,1)$AL.
\end{theorem}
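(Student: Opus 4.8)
The plan is to exhibit one explicit strong$^*$ total labeling and verify directly that its subtractive vertex-weights are $n+t+1, n+t+2, \ldots, 2(n+t)$. Since $|V|=|A|=n+t$, being strong$^*$ forces $\lambda(A)=\{1,2,\ldots,n+t\}$ and $\lambda(V)=\{n+t+1,n+t+2,\ldots,2(n+t)\}$; as there are $n+t$ vertices, the target weight set has exactly the right cardinality, so it suffices to make the $n+t$ subtractive vertex-weights equal to these numbers in some order.

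The labeling I would use reads the tadpole along the oriented walk $u_1\to u_2\to \cdots \to u_t\to v_1\to v_2\to \cdots \to v_n$, which traverses every arc exactly once except the cycle-closing arc $a_n=v_nv_1$. I would assign the arcs the labels $1,2,\ldots,n+t-1$ in this walk order, i.e. $\lambda(b_i)=i$ for $1\le i\le t-1$, $\lambda(c)=t$, $\lambda(a_j)=t+j$ for $1\le j\le n-1$, and set $\lambda(a_n)=n+t$. With these arc labels every vertex except the junction $v_1$ has exactly one incoming and one outgoing arc whose labels are consecutive integers $k$ and $k+1$: the source $u_1$ has no incoming arc but $\lambda(b_1)=1$, the seam vertex $u_t$ has incoming $b_{t-1}$ and outgoing $c$ with labels $t-1,t$, and $v_n$ has outgoing arc $a_n$ with $\lambda(a_n)=n+t=\lambda(a_{n-1})+1$. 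Hence every such vertex $x$ has $wt^-(x)=\lambda(x)-1$. I would then give $v_1$ the smallest vertex label $n+t+1$ and distribute $n+t+2,\ldots,2(n+t)$ among the remaining $n+t-1$ vertices in any order; those vertices then realize precisely the weights $n+t+1,n+t+2,\ldots,2(n+t)-1$.

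The one computation that is not immediate is the weight of the junction $v_1$, which has the two incoming arcs $c$ and $a_n$ and the single outgoing arc $a_1$, so $wt^-(v_1)=\lambda(v_1)+\lambda(c)+\lambda(a_n)-\lambda(a_1)=\lambda(v_1)+(n+t-1)$; with $\lambda(v_1)=n+t+1$ this equals $2(n+t)$, exactly the value missing above. Therefore the weight multiset is $\{n+t+1,n+t+2,\ldots,2(n+t)\}$, an arithmetic progression with first term $n+t+1$ and common difference $1$, and the labeling is strong$^*$. The main obstacle is precisely this junction vertex: because $v_1$ has in-degree $2$ its weight carries an extra positive arc term, so any vertex label larger than $n+t+1$ on $v_1$ would push its weight past $2(n+t)$ — which is why $v_1$ is forced to the smallest label; symmetrically the source $u_1$ has in-degree $0$ and would give too small a weight unless its outgoing arc is labelled $1$. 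Once the arcs are pinned down in walk order, everything else is a telescoping check. The degenerate case $t=1$ is a one-line remark: then $u_1=u_t$ has no incoming arc and outgoing arc $c$, and the same recipe works with $\lambda(c)=1$, the three boundary weights being checked directly.
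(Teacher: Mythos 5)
Your construction is correct and is essentially the paper's: you use the identical arc labeling ($b_1,\ldots,b_{t-1},c,a_1,\ldots,a_{n-1}$ get $1,\ldots,n+t-1$ and $a_n$ gets $n+t$), and the paper's vertex labeling is one particular instance of your observation that, since every vertex other than $v_1$ then satisfies $wt^-(x)=\lambda(x)-1$, the labels $n+t+2,\ldots,2(n+t)$ may be distributed among those vertices arbitrarily once $\lambda(v_1)=n+t+1$. Your telescoping justification and the explicit check of the junction weight $wt^-(v_1)=\lambda(v_1)+n+t-1=2(n+t)$ match the paper's computations, so the argument stands as written.
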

\begin{proof}
Let $\lambda(v_1)=n+t+1$, $\lambda(v_i)=2n+t+2-i$ for $2\leq i \leq n$, and $\lambda(u_i)=2n+2t+1-i$ for $1 \leq i \leq t$. 
Let $\lambda(a_i)=t+i$ for $1\leq i \leq n$, $\lambda(b_i)=i$ for $1\leq i \leq t$, and $\lambda(c)=t$. Then
\begin{align*}
wt^-(v_1) &= \lambda(v_1)+\lambda(a_n)+\lambda(c)-\lambda(a_1)=2n+2t\\
wt^-(v_i) &= \lambda(v_i)+\lambda(a_{i-1})-\lambda(a_i)=2n+t+1-i \quad \text{for }2\leq i\leq n\\
wt^-(u_1) &= \lambda(u_1)-\lambda(b_1)=2n+2t-1\\
wt^-(u_i) &= \lambda(u_i)+\lambda(b_{i-1})-\lambda(b_i)=2n+2t-i \quad \text{for }2\leq i\leq t-1\\
wt^-(u_t) &= \lambda(u_t)+\lambda(b_{t-1})-\lambda(c)=2n+t
\end{align*}
Hence we have the subtractive vertex-weights are $\{n+t+1, n+t+2,\ldots,2n+2t\}$.
\end{proof}

\section{Friendship and General Butterfly}
Let $F_n$ denotes a directed friendship graph constructed by joining $n$ copies of the dicycle $C_3$ with a common vertex, call this common vertex $x$. Let $\{v_{i1},v_{i2}\}_{i=1}^n$ be the other two vertices in the $i\textsuperscript{th}$ dicycle. For $1\leq i \leq n$ let the arcs be $a_{i0}=xv_{i1}$, $a_{i1}=v_{i1}v_{i2}$, and $a_{i2}=v_{i2}x$.\\

\noindent For convenience define the ordered $m$-tuple notation: 
\begin{align*}
\lambda(p_i,p_2,\ldots,p_m)=(q_1,q_2,\ldots,q_m) &\text{ means } \lambda(p_j)=q_j \text{ for } 1\leq j\leq m\\
wt^-(p_i,p_2,\ldots,p_m)=(q_1,q_2,\ldots,q_m) &\text{ means } wt^-(p_j)=q_j \text{ for } 1\leq j\leq m
\end{align*}

\vspace{2mm}
\begin{theorem}
Directed friendship graph $F_n$ has a strong SA$(2n+2,1)$AL. 
\end{theorem}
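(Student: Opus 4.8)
The plan is to exhibit one explicit labeling, in the same spirit as the cycle and tadpole constructions above: send all $2n+1$ vertices to the bottom block $\{1,\ldots,2n+1\}$ (which makes the labeling strong automatically) and all $3n$ arcs to the top block $\{2n+2,\ldots,5n+1\}$, then tune the arc labels one triangle at a time so that the $3n$ subtractive arc-weights become exactly $\{2n+2,2n+3,\ldots,5n+1\}$; note this target interval has precisely $|A|=3n$ elements, as an SA$(2n+2,1)$AL requires.

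First I would fix the vertices by $\lambda(x)=1$, $\lambda(v_{i1})=i+1$, and $\lambda(v_{i2})=n+1+i$ for $1\le i\le n$, which is a bijection onto $\{1,\ldots,2n+1\}$. The point of this choice is that the ``vertex part'' $\lambda(\text{head})-\lambda(\text{tail})$ of each arc-weight then takes a very clean form: it equals $+i$ on $a_{i0}=xv_{i1}$, the constant $+n$ on $a_{i1}=v_{i1}v_{i2}$, and $-(n+i)$ on $a_{i2}=v_{i2}x$.

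Next I would choose the arc labels to neutralize these contributions. Since the contribution along the middle arcs does not depend on $i$, I would set $\lambda(a_{i1})=2n+1+i$ (the block $\{2n+2,\ldots,3n+1\}$), which immediately gives $wt^-(a_{i1})=3n+1+i$, i.e.\ the block $\{3n+2,\ldots,4n+1\}$. For $a_{i0}$ and $a_{i2}$ the vertex contribution is linear in $i$, so a naive run of consecutive labels would collapse the weights to a constant; instead I would use labels of slope $\pm 2$, namely $\lambda(a_{i2})=3n+1+2i$ and $\lambda(a_{i0})=5n+2-2i$. One checks that $\{3n+1+2i\}_{i=1}^{n}$ (the values $3n+3,3n+5,\ldots,5n+1$) and $\{5n+2-2i\}_{i=1}^{n}$ (the values $3n+2,3n+4,\ldots,5n$) interleave by parity into exactly $\{3n+2,\ldots,5n+1\}$, so the three blocks of arc labels partition $\{2n+2,\ldots,5n+1\}$ bijectively. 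Then $wt^-(a_{i2})=(3n+1+2i)+1-(n+1+i)=2n+1+i$ and $wt^-(a_{i0})=(5n+2-2i)+(i+1)-1=5n+2-i$, i.e.\ the blocks $\{2n+2,\ldots,3n+1\}$ and $\{4n+2,\ldots,5n+1\}$. Together with the middle block $\{3n+2,\ldots,4n+1\}$, the $3n$ arc-weights are precisely $\{2n+2,\ldots,5n+1\}$, an arithmetic progression with first term $2n+2$ and common difference $1$.

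The main thing to get right is the simultaneous bookkeeping in the last step: one must verify at once that the three families of arc labels and the three families of weights are each pairwise disjoint and exhaust $\{2n+2,\ldots,5n+1\}$. The slope-$\pm2$ device is exactly what makes this work in one shot — it dodges the obstruction that the $i$-dependent vertex contributions along $a_{i0}$ and $a_{i2}$ would otherwise either force those weights to be constant (slope-$1$ labels) or push them past $5n+1$ (larger slopes overshoot, and slope $0$ is impossible for a bijection). No case analysis in $n$ is needed: the formulas are uniform for all $n\ge 1$, and at $n=1$ they recover the strong SA$(4,1)$AL of $C_3$ obtained earlier.
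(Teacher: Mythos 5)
Your construction is correct: the labels form a bijection onto $\{1,\ldots,5n+1\}$ with the vertices occupying $\{1,\ldots,2n+1\}$, and the three weight families fill $\{2n+2,\ldots,3n+1\}$, $\{3n+2,\ldots,4n+1\}$, $\{4n+2,\ldots,5n+1\}$ as claimed. This is essentially the paper's approach -- the paper uses the identical vertex labeling and only shuffles the bookkeeping the other way (consecutive slope-$1$ blocks of arc labels on all three arc families, so that the parity interleaving happens among the weights of $a_{i0}$ and $a_{i2}$ rather than among their labels).
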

\begin{proof}
Let $\lambda(x)=1$, and let
\begin{align*}
\lambda(v_{11},v_{21},\ldots,v_{n1}) &= (2,3,\ldots,n+1) \\
\lambda(v_{12},v_{22},\ldots,v_{n2}) &= (n+2, n+3,\ldots, 2n+1) \\
\lambda(a_{10},a_{20},\ldots,a_{n0}) &= (2n+2,2n+3,\ldots, 3n+1) \\
\lambda(a_{11},a_{21},\ldots,a_{n1}) &= (3n+2,3n+3,\ldots, 4n+1) \\
\lambda(a_{n2},a_{(n-1)2},\ldots, a_{12}) &= (4n+2, 4n+3, \ldots, 5n+1)
\end{align*}
Hence we have
\begin{align*}
wt^-(a_{n2},a_{(n-1)2},\ldots, a_{12}) &= (2n+2,2n+4,\ldots, 4n)\\
wt^-(a_{10},a_{20},\ldots,a_{n0}) &= (2n+3,2n+5,\ldots,4n+1) \\
wt^-(a_{11},a_{21},\ldots,a_{n1}) &= (4n+2,4n+3,\ldots,5n+1)
\end{align*}
Combined we have the subtractive arc-weigts are $\{2n+2,2n+3,\ldots,5n+1\}$.
\end{proof}

\vspace{5mm}
\noindent A butterfly graph (bow-tie graph, hourglass graph) is a planar graph constructed by joining two copies of the cycle graph $C_3$ with a common vertex and is therefore isomorphic to the friendship graph $F_2$. Let $E_n$ denotes the directed general butterfly graph that constructed by joining two copies of the dicycle $C_n$ at a common vertex. Note that $E_3\cong F_2$.\\
Let $\{v_i\}_{i=1}^n$ and $\{u_i\}_{i=1}^n$ be the vertices of the two dicycles and let $x=v_n=u_n$.\\
Let $a_i=v_iv_{i+1}$ and $b_i=u_iu_{i+1}$ for $1\leq i\leq n-1$, and let $a_n=v_nv_1$ and $b_n=u_nu_1$.\\

\begin{theorem}
Directed general butterfly graph $E_n$ has a strong SA$(2n,1)$AL.
\end{theorem}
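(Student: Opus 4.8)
The plan is to adapt the construction that gives the dicycle $C_n$ its strong SA$(n+1,1)$AL to a graph formed from two copies of the dicycle $C_n$ glued at the single vertex $x$. The guiding idea is to place $x$ at the \emph{middle} vertex label $n$, so that each of the two dicycles receives a run of consecutive vertex labels and, read in its orientation, has exactly one ``wrap-around'' arc whose endpoints jump by $-(n-1)$ rather than $+1$.

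First I would fix the strong vertex labeling $\lambda(v_i)=i$ for $1\le i\le n-1$, $\lambda(x)=n$, and $\lambda(u_i)=n+i$ for $1\le i\le n-1$, which is a bijection $V\to\{1,\dots,2n-1\}$. With this choice, along $v_1\to v_2\to\cdots\to v_{n-1}\to x$ and along $x\to u_1\to u_2\to\cdots\to u_{n-1}$ consecutive vertices have labels differing by $+1$, while the two arcs $a_n=xv_1$ and $b_{n-1}=u_{n-1}x$ each join a vertex labeled $\ell$ to a vertex labeled $\ell-(n-1)$. Hence for every arc $e$ one has $wt^{-}(e)=\lambda(e)+1$ for the $2n-2$ ``forward'' arcs $a_1,\dots,a_{n-1},b_1,\dots,b_{n-2},b_n$, and $wt^{-}(e)=\lambda(e)-(n-1)$ for the two exceptional arcs $a_n$ and $b_{n-1}$.

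Next I would assign the arc labels. Because a forward arc only shifts its label by $+1$ while an exceptional arc shifts it by $-(n-1)$, the requirement that the $2n$ arc labels exhaust $\{2n,\dots,4n-1\}$ and the $2n$ weights also exhaust $\{2n,\dots,4n-1\}$ essentially pins down the two exceptional arcs: they must carry the labels $3n-1$ and $4n-1$, which yield the weights $2n$ and $3n$ — precisely the two weights the forward arcs cannot produce. Concretely I would take $\lambda(a_i)=4n-1-i$ for $1\le i\le n-1$, $\lambda(a_n)=3n-1$, $\lambda(b_i)=3n-1-i$ for $1\le i\le n-2$, $\lambda(b_{n-1})=4n-1$, and $\lambda(b_n)=2n$; then verify that this is a bijection onto $\{2n,\dots,4n-1\}$ and read off the weights as the union of the progressions $\{3n+1,\dots,4n-1\}$ (from the $a_i$ with $i<n$), $\{2n+1,\dots,3n-1\}$ (from the $b_i$ with $i<n-1$ together with $b_n$), and the two singletons $\{2n\}$ and $\{3n\}$ (from $a_n$ and $b_{n-1}$), which combine to $\{2n,2n+1,\dots,4n-1\}$.

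The one place where care is needed is this last step: the forward arcs automatically give a block of consecutive weights, but with the labels left over for them that block neither starts at $2n$ nor ends at $4n-1$ and it misses the value $3n$ in the middle, so one must check that handing the labels $3n-1,4n-1$ to $a_n,b_{n-1}$ fills exactly the gaps $\{2n,3n\}$ while simultaneously leaving no repeated label among the forward arcs. Everything else is a routine bijection and arithmetic-progression check; writing out $n=3$ (which recovers $F_2$) and $n=4$ by hand is a good way to confirm the index bookkeeping, and one should note that the range $1\le i\le n-2$ for the $b_i$ is simply empty when $n=2$.
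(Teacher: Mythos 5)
Your construction is correct: the vertex labels $\lambda(v_i)=i$, $\lambda(x)=n$, $\lambda(u_i)=n+i$ together with the stated arc labels form a bijection onto $\{1,\dots,4n-1\}$ with $\lambda(V)=\{1,\dots,2n-1\}$, and the weights work out exactly as you claim to $\{2n,2n+1,\dots,4n-1\}$. This is essentially the same approach as the paper --- exhibit an explicit strong total labeling and verify the arc weights directly --- differing only in the particular labeling chosen (the paper interleaves the two cycles by parity, whereas you give each cycle a consecutive block of vertex labels with $x$ in the middle), so no further comparison is needed.
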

\begin{proof}
Let $\lambda(x)=2n-1$. Let $\lambda(v_i)=2n-1-2i$ and $\lambda(u_i)=2n-2i$ for $1\leq i \leq n-1$.\\
Let $\lambda(a_i)=4n-1-2i$ and $\lambda(b_i)=4n-2-2i$ for $1\leq i \leq n-2$.\\
Let $\lambda(a_{n-1},b_{n-1},a_n,b_n)=(2n+1,2n,4n-2,4n-1)$. Then
\begin{align*}
wt^-(a_i) &= \lambda(a_i)+\lambda(v_{i+1})-\lambda(v_i)=4n-3-2i \quad \text{for } 1\leq i \leq n-2\\
wt^-(b_i) &= \lambda(b_i)+\lambda(u_{i+1})-\lambda(u_i)=4n-4-2i \quad \text{for } 1\leq i \leq n-2\\
wt^-(a_{n-1}) &= \lambda(a_{n-1})+\lambda(x)-\lambda(v_{n-1})=4n-1\\
wt^-(b_{n-1}) &= \lambda(b_{n-1})+\lambda(x)-\lambda(u_{n-1})=4n-3\\
wt^-(a_n) &= \lambda(a_n)+\lambda(v_1)-\lambda(x)=4n-4\\
wt^-(b_n) &= \lambda(b_n)+\lambda(u_1)-\lambda(x)=4n-2
\end{align*}
Combined we have the subtractive arc-weigts are $\{2n,2n+1,\ldots,4n-1\}$.
\end{proof}

\vspace{2mm}
\begin{theorem}
Directed general butterfly graph $E_n$ has a strong$^*$ SVAL.
\end{theorem}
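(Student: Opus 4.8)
The plan is to write down an explicit strong$^*$ total labeling of $E_n$ and then check that the $2n-1$ subtractive vertex-weights are pairwise distinct. Since $|A|=2n$ and $|V|=2n-1$, a strong$^*$ labeling is forced to put $\{1,\dots,2n\}$ on the arcs and $\{2n+1,\dots,4n-1\}$ on the vertices. The guiding idea is that if each of the two directed $n$-cycles receives a block of consecutive arc-labels \emph{in cyclic order}, then around each cycle the incoming and outgoing arc-labels almost cancel, so each vertex-weight collapses to its own label plus a tiny constant; one then only has to place the vertex-labels sensibly.

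Concretely, first I would set $\lambda(a_i)=i$ for $1\le i\le n$ and $\lambda(b_i)=n+i$ for $1\le i\le n$, which uses exactly $\{1,\dots,2n\}$ on the arcs. Reading off the incidences, the in/out-degree-$1$ vertices give
\begin{align*}
wt^-(v_i)&=\lambda(v_i)+\lambda(a_{i-1})-\lambda(a_i)=\lambda(v_i)-1 \quad\text{for } 2\le i\le n-1,\\
wt^-(u_i)&=\lambda(u_i)+\lambda(b_{i-1})-\lambda(b_i)=\lambda(u_i)-1 \quad\text{for } 2\le i\le n-1,\\
wt^-(v_1)&=\lambda(v_1)+\lambda(a_n)-\lambda(a_1)=\lambda(v_1)+n-1,\\
wt^-(u_1)&=\lambda(u_1)+\lambda(b_n)-\lambda(b_1)=\lambda(u_1)+n-1,
\end{align*}
while the center $x$, whose in-arcs are $a_{n-1},b_{n-1}$ and out-arcs are $a_n,b_n$, satisfies
\[
wt^-(x)=\lambda(x)+\lambda(a_{n-1})+\lambda(b_{n-1})-\lambda(a_n)-\lambda(b_n)=\lambda(x)-2 .
\]

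Next I would choose the vertex-labels so that the three ``irregular'' vertices $x$, $v_1$, $u_1$ are kept away from the bulk: put $\lambda(x)=2n+1$, put $\{\lambda(v_1),\lambda(u_1)\}=\{4n-2,4n-1\}$, and spread the remaining labels $\{2n+2,\dots,4n-3\}$ arbitrarily over $v_2,\dots,v_{n-1},u_2,\dots,u_{n-1}$. Then the bulk vertices realize precisely the weights $\{2n+1,2n+2,\dots,4n-4\}$, the center gives $wt^-(x)=2n-1$, and $\{wt^-(v_1),wt^-(u_1)\}=\{5n-3,5n-2\}$. These three weight-sets are pairwise disjoint — the only inequality required is $5n-3>4n-4$ — so all $2n-1$ subtractive vertex-weights are distinct, and $\lambda$ is a strong$^*$ SVAL.

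Everything here is routine arithmetic; the one point deserving care is the bookkeeping at the center, where $deg^+(x)=deg^-(x)=2$ produces the extra $-2$ in $wt^-(x)$, together with the placement of $v_1,u_1,x$ so their exceptional weights fall outside the consecutive run created by the degree-$1$ vertices. Conceptually this is just the observation that a single dicycle $C_n$ already admits a strong$^*$ SV$(n+1,1)$AL via the same $\lambda(c_i)=i$ device — compare the strong$^*$ SV$(n,1)$AL for dipaths earlier — and the butterfly labeling is two such copies glued at $x$, so all the real work is resolving the collision caused by the shared vertex. I would finish by spot-checking the small cases $n=2,3$ to confirm the displayed index ranges are either non-vacuous or harmlessly empty.
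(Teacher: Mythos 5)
Your construction is correct: the arc-label bookkeeping at $v_1$, $u_1$, and the center $x$ all check out, and the three weight-sets $\{2n-1\}$, $\{2n+1,\dots,4n-4\}$, $\{5n-3,5n-2\}$ are indeed pairwise disjoint. This is essentially the same approach as the paper's proof, which also gives an explicit strong$^*$ labeling with arithmetically spaced arc labels along each cycle so that generic vertex-weights collapse to $\lambda(v)$ plus a constant, differing only in the particular numerical assignment.
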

\begin{proof}
Let $\lambda(x)=4n-1$. Let $\lambda(v_i)=2n-1+2i$ and $\lambda(u_i)=2n+2i$ for $1\leq i \leq n-1$.\\
Let $\lambda(a_i)=2n-1-2i$ and $\lambda(b_i)=2n-2i$ for $1\leq i \leq n-1$.\\
Let $\lambda(a_n)=2n-1$ and $\lambda(b_n)=2n$. Then
\begin{align*}
wt^-(x) &= \lambda(x)+\lambda(a_{n-1})+\lambda(b_{n-1})-\lambda(a_n)-\lambda(b_n)=3\\
wt^-(v_1) &= \lambda(v_1)+\lambda(a_n)-\lambda(a_1)=2n+3\\
wt^-(u_1) &= \lambda(u_1)+\lambda(b_n)-\lambda(b_1)=2n+4\\
wt^-(v_i) &= \lambda(v_i)+\lambda(a_{i-1})-\lambda(a_i)=2n+1+2i \quad \text{for } 2\leq i \leq n-1\\
wt^-(u_i) &= \lambda(u_i)+\lambda(b_{i-1})-\lambda(b_i)=2n+2+2i \quad \text{for } 2\leq i \leq n-1
\end{align*}
Combined we have the subtractive arc-weigts are $\{3\} \cup \{2n+3,2n+4,\ldots,4n\}$.\\ 
Since $2n+3>3$ for all $n$ we have all $wt^-$ are distinct.
\end{proof}



\begin{thebibliography}{99}
\bibitem{barone} C. A. Barone. Magic labelings of directed graphs. Master thesis, University of Victoria, 2004.
\bibitem{gallian} J. Gallian. A Dynamic Survey of Graph Labeling. \textit{The Electronic Journal of Combinatorics}. \textbf{DS6}, 2017.
\bibitem{4end} C. Huang, A. Kotzig, A. Rosa. Further results on tree labellings. \textit{Util. Math.}, \textbf{21c}: 31--48, 1982.
\end{thebibliography}
\end{document}